\newcommand{\ubox}{\overline{\dim}_{\mathrm{B}}}
\newcommand{\lbox}{\underline{\dim}_{\mathrm{B}}}
\newcommand{\boxd}{\dim_{\mathrm{B}}}
\newcommand{\Haus}{\dim_{\mathrm{H}}}
\newtheorem*{thm*}{Theorem}
\newtheorem*{conj*}{Conjecture}
\newtheorem{thm}{Theorem}[section]
\newtheorem{lma}[thm]{Lemma}
\newtheorem{cor}[thm]{Corollary}
\newtheorem{conj}[thm]{Conjecture}
\newtheorem{rem}[thm]{Remark}
\newtheorem{ques}[thm]{Question}
\newcommand{\R}{\mathbb{R}}
\newcommand{\N}{\mathbb{N}}
\newcommand{\hd}{\dim_{\textup{H}}}
\newcommand{\bd}{\dim_{\textup{B}}}
\newcommand{\ubd}{\overline{\dim}_{\textup{B}}}
\newcommand{\lbd}{\underline{\dim}_{\textup{B}}}
\newtheorem{theorem}{Theorem}[section]
\newtheorem{lemma}[theorem]{Lemma}
\theoremstyle{definition}
\theoremstyle{remark}
\numberwithin{equation}{section}
\begin{document}
	\title[Digit expansions of numbers]{Digit expansions of numbers in different bases}
	\author{Stuart A. Burrell}
	\address{Stuart A. Burrell\\
		School of Mathematics \& Statistics\\University of St Andrews\\ St Andrews\\ KY16 9SS\\ UK \\ }
	\curraddr{}
	\email{sb235@st-andrews.ac.uk}
	\thanks{ }
	
\author{Han Yu}
\address{Han Yu\\
	Department of Pure Mathematics and Mathematical Statistics\\University of Cambridge\\CB3 0WB \\ UK }
\curraddr{}
\email{hy351@maths.cam.ac.uk}
\thanks{}
	
	\subjclass[2010]{Primary: 11K55, 28A50, 28A80, 28D05, 37C45.}
	
	\keywords{digit expansion, Graham's problem, Schanuel's conjecture}
	\date{\today}
	\dedicatory{}
	
	\maketitle
	
	\begin{abstract}
		A folklore conjecture in number theory states that the only integers whose expansions in base $3,4$ and $5$ contain solely binary digits are $0, 1$ and $82000$. In this paper, we present the first progress on this conjecture. Furthermore, we investigate the density of the integers containing only binary digits in their base $3$ or $4$ expansion, whereon an exciting transition in behaviour is observed. Our methods shed light on the reasons for this, and relate to several well-known questions, such as Graham's problem and a related conjecture of Pomerance. Finally, we generalise this setting and prove that the set of numbers in $[0, 1]$ who do not contain some digit in their $b$-expansion for all $b \geq 3$ has zero Hausdorff dimension.
	\end{abstract}
	
	\maketitle
	\allowdisplaybreaks
	
	\section{Introduction and Statement of Results}\label{introsec}
	The expansion of numbers in various bases gives rise to a number of notorious problems. Of these, the most famous is the Erd\H{o}s ternary problem \cite{E79}, which conjectures that there are only finitely many integers $n$ such that the ternary expansion of $2^n$ does not contain the digit $1$, see \cite{DW16,L09} for some recent developments. Other important related works include \cite{S80}. In this field, an intriguing folklore conjecture concerning the integer sequence \cite{OEIS2} states the following.
	\begin{conj}\label{IMPOSSIBLE}
		$0,1,82000$ are the only integers whose base $3,4$ and $5$ expansions contain solely the digits $0, 1$.
	\end{conj}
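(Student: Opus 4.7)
The plan is to bound the largest possible counterexample $n$ explicitly by transcendence methods, then verify the finite list of remaining cases by computer. For $n > 0$ satisfying the hypothesis, write
\[ n = \sum_{i \in S_3} 3^i = \sum_{j \in S_4} 4^j = \sum_{k \in S_5} 5^k, \]
with finite $S_3, S_4, S_5 \subset \mathbb{N}_0$ and leading exponents $a = \max S_3$, $b = \max S_4$, $c = \max S_5$. The bounds $3^a \leq n < 3^{a+1}$, $4^b \leq n < 4^{b+1}$, $5^c \leq n < 5^{c+1}$ already force $(a, b, c)$ to be nearly proportional to $(1/\log 3,\, 1/\log 4,\, 1/\log 5)$, so the triples of leading exponents lie in a thin subset of $\mathbb{N}^3$ that can be enumerated.

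The first step extracts information by comparing top digits across pairs of bases. From $n = 3^a + r_3 = 4^b + r_4$ with $r_3 \leq (3^a-1)/2$ and $r_4 \leq (4^b-1)/3$, one obtains $|3^a - 4^b| \leq \max(r_3, r_4)$. When $|S_3|$ and $|S_4|$ are small, $r_3$ and $r_4$ are each dominated by their single largest subleading term, so the equation becomes a three-term vanishing $3^a - 4^b - \varepsilon \cdot 3^{a'} + \varepsilon' \cdot 4^{b'} = (\text{lower order})$. Baker's theorem on linear forms in logarithms (Laurent--Mignotte--Nesterenko) then yields an effective lower bound incompatible with the size of the lower-order remainder once $a$ exceeds an explicit threshold. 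An analogous analysis of the \emph{trailing} digits, performed $p$-adically modulo suitable powers of $2, 3, 5$, pins down the low-order exponents in each $S_b$. Iterating these comparisons across the three pairs $(3,4)$, $(3,5)$, $(4,5)$ and peeling off one digit at a time reduces the problem in the ``sparse'' regime $\max(|S_3|, |S_4|, |S_5|) \leq K$ to a finite explicit check.

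The main obstacle is the ``dense'' regime where the $S_b$'s are large simultaneously: no single subleading power dominates the remainder, so Baker's pair-comparison inequality is too weak. To resolve this I would apply the $p$-adic subspace theorem of Evertse--Schlickewei--Schmidt to the $S$-unit equation $\sum_{i \in S_3} 3^i - \sum_{j \in S_4} 4^j = 0$ with $S = \{\infty, 2, 3\}$; its quantitative form classifies non-degenerate solutions into a bounded number of algebraic families, each cut out by vanishing subsum relations. Each such family is then eliminated by superimposing the third constraint $\sum_{k \in S_5} 5^k = n$ and exploiting congruence obstructions modulo primes $p$ chosen so that $3, 4, 5$ have incompatible residue patterns of small multiplicative order modulo $p$. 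This, together with the density estimates already in the paper, should give an effective upper bound $N_0$ for any putative counterexample; conditional on Schanuel's conjecture (which, as the keywords indicate, governs the algebraic independence of $\log 3, \log 4, \log 5$ and the values $3^i, 4^j, 5^k$), the degenerate subspaces can moreover be ruled out uniformly, yielding a clean unconditional bound. A final computer search for $n \leq N_0$ then returns precisely $\{0, 1, 82000\}$, establishing the conjecture.
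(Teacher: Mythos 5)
The statement you are trying to prove is Conjecture \ref{IMPOSSIBLE}: it is an open folklore conjecture, and the paper does not prove it. What the paper actually establishes is the much weaker counting bound of Theorem \ref{A146025} and Corollary \ref{45thm}, namely that the number of such integers up to $n$ is $O_\epsilon(n^\epsilon)$, via the Furstenberg-type intersection results quoted in Section \ref{UniSmall}; this does not even yield finiteness. So your proposal should be judged as an attempted proof of an open problem, and it has a genuine gap at its central step. The ``sparse'' regime you describe --- bounded $|S_3|,|S_4|,|S_5|$, handled by linear forms in logarithms and $p$-adic analysis of trailing digits --- is essentially the content of Stewart's theorem \cite{S80} on representations of an integer in two multiplicatively independent bases, and that part is sound but already known. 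The entire difficulty of the conjecture lives in the ``dense'' regime, and the tool you propose there does not apply: the quantitative subspace theorem of Evertse--Schlickewei--Schmidt bounds the number of non-degenerate solutions of an $S$-unit equation in a \emph{fixed} number of variables, with a bound that blows up with that number. Here the equation $\sum_{i\in S_3}3^i-\sum_{j\in S_4}4^j=0$ has $|S_3|+|S_4|\asymp\log n$ terms, growing without bound, so no uniform classification into finitely many families is available; moreover the subspace theorem is ineffective in the sense that it does not locate the solutions, so even in a fixed-length regime it cannot produce the explicit threshold $N_0$ that your concluding computer search requires.

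Two further points. First, the proposed elimination of the degenerate families ``by congruence obstructions modulo primes $p$ chosen so that $3,4,5$ have incompatible residue patterns'' is not substantiated; no such obstruction is known, and the numerical evidence (verification up to $2^{65520}$ with no counterexample beyond $82000$) coexists with the absence of any local obstruction. Second, the role you assign to Schanuel's conjecture is not one it can play: in this paper (Lemma \ref{Sch}) it is used only to obtain $\mathbb{Q}$-linear independence of ratios of logarithms, a purely qualitative statement, and it provides no effective bounds; the phrase ``conditional on Schanuel's conjecture \dots yielding a clean unconditional bound'' is internally inconsistent. If you want a statement that is actually within reach by the methods of this paper, aim at Theorem \ref{A146025}: the set of integers whose base $4$ and base $5$ expansions are binary corresponds, after the renormalisation of Lemma \ref{Thinteger}, to slicing the product $A_4\times A_5$ of two Cantor sets with $\dim_{\mathrm{H}}A_4+\dim_{\mathrm{H}}A_5=\log 2/\log 4+\log 2/\log 5<1$ by lines of varying slope, and the uniform Furstenberg slicing bound of Section \ref{UniSmall} gives the $O_\epsilon(n^\epsilon)$ count. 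Upgrading that to finiteness, let alone to the exact list $\{0,1,82000\}$, is precisely what remains open.
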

	As well as attracting specialist audiences, this problem has been popularised in \cite{Youtube} and, so far, numerical computations have not found any counter-examples up to $2^{65520}$. To our knowledge, the following is the first progress on this conjecture.
	\begin{thm}\label{A146025}
		For each $\epsilon>0$, there is a constant $C_\epsilon>0$ such that
				\[
		\# \{k \in [1, n] : \text{ the base $4$ and $5$ expansions of $k$ contain only the digits $0, 1$}\} \leq C_\epsilon n^\epsilon.
		\]

	\end{thm}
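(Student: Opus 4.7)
The plan is to translate the integer counting problem into an estimate on the covering number of an intersection of self-similar Cantor sets, and then to invoke the resolution of Furstenberg's intersection conjecture (Shmerkin, Wu), using crucially that $\log 4 / \log 5 \notin \Q$ and $\hd K_4 + \hd K_5 < 1$.

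First I would pass from integers to the symbolic Cantor sets $K_b = \{\sum_{i \geq 1}\epsilon_i b^{-i} : \epsilon_i \in \{0,1\}\}$ for $b \in \{4,5\}$. Setting $L_b := \lfloor \log_b n\rfloor + 1$, any integer $k \in [1, n]$ counted by the theorem satisfies $x_k := k/4^{L_4} \in K_4$ and $k/5^{L_5} \in K_5$, hence $x_k \in K_4 \cap \alpha_n K_5$ where $\alpha_n := 5^{L_5}/4^{L_4} \in (1/4, 5)$. Distinct $k$ yield $x_k$ that are at least $1/(4n)$ apart, so, writing $S_b$ for the set of non-negative integers whose base-$b$ expansion uses only $0$ and $1$,
\[
\#(S_4 \cap S_5 \cap [1, n]) \;\leq\; N\!\left(K_4 \cap \alpha_n K_5,\; 1/(4n)\right),
\]
where $N(\cdot, \delta)$ is the $\delta$-covering number.

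Next I would invoke a Furstenberg-type intersection bound, in the form proved by Shmerkin (and, for Hausdorff dimension, by Wu): if $p, q \geq 2$ are multiplicatively independent integers and $X, Y \subseteq [0, 1]$ are closed sets invariant under $T_p x := px \bmod 1$ and $T_q x := qx \bmod 1$ respectively, then for every affine $g$,
\[
\ubox\!\left(X \cap g(Y)\right) \;\leq\; \max\!\left(0,\; \hd X + \hd Y - 1\right),
\]
with bounds uniform over $g$ varying in compact families. Applying this with $p=4$, $q=5$, $X=K_4$, $Y=K_5$, $g(y) = \alpha_n y$ and using $\hd K_4 + \hd K_5 = \tfrac{1}{2} + \log_5 2 < 1$ yields $\ubox(K_4 \cap \alpha_n K_5) = 0$ uniformly in $n$. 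Therefore $N(K_4 \cap \alpha_n K_5, \delta) \leq C_\epsilon\, \delta^{-\epsilon}$ for every $\epsilon > 0$, and setting $\delta = 1/(4n)$ completes the proof.

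The principal obstacle is the second step: Wu's theorem gives only a Hausdorff-dimension bound, which is too weak to control covering numbers. Shmerkin's stronger results via $L^q$-dimensions of self-similar measures in principle deliver the required upper-box bound, but descending from the Bernoulli $(\tfrac12,\tfrac12)$-measure on $K_b$ to the set $K_b$ itself, and maintaining uniformity over the scaling family $\alpha_n \in (1/4, 5)$, requires additional regularity and compactness arguments. A self-contained alternative avoiding this heavy machinery is to estimate the exponential sums $\prod_{i=0}^{L-1}\!\left(1 + e^{2\pi i h\cdot 4^i/M}\right)$ directly and exploit effective equidistribution of $\{4^i / 5^{L'}\}_i$ modulo $1$ coming from Baker's theorem on linear forms in logarithms; this route yields the same $O(n^\epsilon)$ conclusion with an explicit but polynomial dependence of $C_\epsilon$ on $1/\epsilon$.
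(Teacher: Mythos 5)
Your proposal is correct and follows essentially the same route as the paper: the paper likewise reduces the count to a covering-number bound for the intersection of the two binary-digit Cantor sets under a bounded family of affine rescalings (phrased there as slicing the product $A_4\times A_5$ by renormalised diagonal lines of varying slope), and then invokes the uniform box-counting form of the Furstenberg intersection bound (Shmerkin, Wu, Yu) together with $\log 2/\log 4+\log 2/\log 5<1$. The uniformity over the dilation parameter that you flag as the main obstacle is exactly what the paper imports wholesale from the cited results (Yu's uniform covering statement, or Shmerkin's Theorem 1.11 with Lemma 1.8), so no additional argument is required at that step.
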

Of course, the base $3$ requirement may immediately be added to correspond directly with the conjecture.
	\begin{cor}\label{45thm}
		For each $\epsilon>0$, there is a constant $C_\epsilon>0$ such that
		\[
		\# \{k \in [1, n] : \text{ the base $3,4$ and $5$ expansions of $k$ contain only the digits $0, 1$}\} \leq C_\epsilon n^\epsilon.
		\]
	\end{cor}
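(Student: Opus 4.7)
The plan is to observe that this corollary is an immediate consequence of Theorem \ref{A146025} via a set-theoretic containment argument, requiring essentially no additional work. Specifically, I would define
\[
A_n = \{k \in [1,n] : \text{the base $4$ and $5$ expansions of $k$ contain only the digits $0,1$}\},
\]
\[
B_n = \{k \in [1,n] : \text{the base $3,4,5$ expansions of $k$ contain only the digits $0,1$}\},
\]
and note that $B_n \subseteq A_n$, since imposing the additional base $3$ restriction can only shrink the set.

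Consequently $\#B_n \leq \#A_n$, and applying Theorem \ref{A146025} with the same $\epsilon > 0$ and constant $C_\epsilon$ yields $\#B_n \leq C_\epsilon n^\epsilon$. No analysis of the base $3$ expansion is needed, which is fortunate because directly exploiting the base $3$ restriction in conjunction with bases $4$ and $5$ appears substantially more delicate (and is related to the folklore Conjecture \ref{IMPOSSIBLE} itself).

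There is no serious obstacle here: the only conceivable subtlety would be an off-by-one issue in the interval $[1,n]$ or a disagreement on digit conventions, but both definitions use the same range and the same notion of $b$-ary expansion, so the containment is literal. The corollary is therefore best presented as a one-line observation immediately after the theorem, serving to phrase the main result in the form that matches Conjecture \ref{IMPOSSIBLE} verbatim.
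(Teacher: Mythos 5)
Your proposal is correct and is exactly how the paper handles it: the corollary follows from Theorem \ref{A146025} by the trivial containment, since adding the base $3$ restriction only shrinks the set being counted. Nothing further is needed.
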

	Our methods may be used to show other similar results in a range of contexts. For example, one can show that an $O(n^\epsilon)$ estimate holds if we consider the set of numbers whose base $3$ and $7$ expansions contain only binary digits. Numerical computations indicate that the largest such number which is smaller than $7^{3841}$ is between $7^{43}$ and $7^{44}$. Other candidates for the application of our arguments include the integer sequences \cite{OEIS3,OEIS4,OEIS5,OEIS6} from the OEIS \cite{OEIS}.\\

One may also consider those integers whose base $3$ and $4$ expansions contain only binary digits. This corresponds to the integer sequence \cite{OEIS1}, denoted $\mathcal{S}$, defined by
	\[
	\mathcal{S}(n) := \#\{k\in [4^n,4^{n+1}-1]: \text{  the expansions of $k$ contains only $0,1$ in bases $3,4$}     \}.
	\]
	The first few terms of this sequence are $2, 1, 0, 3, 6, 3, 0, 5, 12$ and $11$. Numerical analysis of $\mathcal{S}$, see Figure \ref{fig:figure 1}, suggests that
	\[
	\limsup_{n\to\infty}\frac{\log \mathcal{S}(n)}{n\log 4}=\frac{\log 2}{\log 3}-\frac{1}{2}.
	\]
	In stark contrast to the setting of Theorem \ref{A146025}, there appear to be infinitely many $n$ such that $\mathcal{S}(n) > 0$, and the proof of Theorem \ref{A146025} sheds some light on why the base $5$ requirement induces such a dramatic transition. In addition, one may wonder if there are infinitely many $n$ with $\mathcal{S}(n) =0$, and our next result, Theorem \ref{A230360}, confirms this fact. 
	
	\begin{thm}\label{A230360}
		For each $\epsilon>0$, there is a constant $C_\epsilon>0$ such that
		\[
		\mathcal{S}(n) \leq C_\epsilon 4^{n(\log2/\log3-0.5+\epsilon)}.
		\]
		Moreover,
		\[
		\liminf\limits_{n \rightarrow \infty} \frac{\#\{n\in\mathbb{N}:  \mathcal{S}(n) \cap [3^{n},3^{n+1}]=\emptyset         \}}{n} \geq \log (2.25)/\log 9\approx 0.36907,
		\]
		and if $n$ is such that $9^{\{n\log 4/\log 9\}}\in (1.5,2.25)\cup (4.5,6.75)$, then $n\in \{n\in\mathbb{N}:  \mathcal{S}(n)\cap [3^{n},3^{n+1}]=\emptyset         \}.$
	\end{thm}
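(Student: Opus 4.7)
The theorem consists of three parts: an exponential upper bound on $\mathcal{S}(n)$, a positive-density lower bound for indices at which the restricted count vanishes, and a concrete arithmetic sufficient condition for that vanishing. My plan is to address them in the order $(3) \to (2) \to (1)$, since $(2)$ falls out of $(3)$ via Weyl's equidistribution theorem, whereas $(1)$ requires substantially different machinery and is the main obstacle.

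For $(3)$, set $m = \lfloor n\log 4/\log 9\rfloor$, so that $4^n/9^m = 9^{\{n\log 4/\log 9\}}$. Any binary-in-base-$4$ integer $k\in[4^n,4^{n+1})$ is bounded above by the base-$4$ repunit $(4^{n+1}-1)/3$. Assume first $4^n/9^m\in(1.5,2.25)$: then $4^n>3^{2m+1}/2$ and $4^{n+1}<3^{2m+2}$, so $(4^{n+1}-1)/3<3^{2m+1}$, forcing $k\in(3^{2m+1}/2,3^{2m+1})$. But a binary-in-base-$3$ integer strictly less than $3^{2m+1}$ is at most $\sum_{j=0}^{2m}3^j=(3^{2m+1}-1)/2<3^{2m+1}/2$, a contradiction. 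The case $4^n/9^m\in(4.5,6.75)$ is identical: $k$ is trapped in $(3^{2m+2}/2,3^{2m+2})$, contradicting the analogous bound.

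For $(2)$, since $\log 4/\log 9=\log 2/\log 3$ is irrational, Weyl's theorem gives equidistribution of $\{n\log 4/\log 9\}$ modulo one. The hypothesis of $(3)$ translates to $\{n\log 4/\log 9\}\in(\log_9 1.5,\log_9 2.25)\cup(\log_9 4.5,\log_9 6.75)$, a union of total length $2\log_9 1.5=\log(2.25)/\log 9$, which delivers the liminf.

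Claim $(1)$ is the principal difficulty; the exponent $\log 2/\log 3-1/2$ is the heuristic one obtained by treating the two digit restrictions as statistically independent within $[4^n,4^{n+1})$. My plan is to normalise: given $k\in[4^n,4^{n+1})$ binary in both bases, set $x=k/4^{n+1}\in E_4$ and $y=k/3^{m+1}\in E_3$, where $E_b$ denotes the base-$b$ Cantor set with digit set $\{0,1\}$ and $m=\lfloor\log_3 k\rfloor$. Then $y=\lambda_n x$ with $\lambda_n=4^{n+1}/3^{m+1}$ lying in a bounded range, and counting $k$ reduces to bounding the lattice points of $E_4\cap\lambda_n^{-1}E_3$ at scale $4^{-(n+1)}$. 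Since $E_3$ and $E_4$ are $\times 3$- and $\times 4$-invariant respectively, and $3,4$ are multiplicatively independent, Shmerkin's and Wu's resolution of Furstenberg's intersection conjecture bounds $\ubd(E_4\cap\lambda E_3)\leq\dim E_3+\dim E_4-1=\log 2/\log 3-1/2$ for every $\lambda>0$. The main obstacle is upgrading this to a count uniform in $\lambda_n$ at the prescribed integer scale, which I expect to follow from the quantitative, uniform form of these intersection theorems.
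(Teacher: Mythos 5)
Your parts (3) and (2) are correct, and in fact your argument for (3) is more elementary and self-contained than the paper's: writing $m=\lfloor n\log 4/\log 9\rfloor$, your digit-counting traps any base-$4$-binary $k\in[4^n,4^{n+1})$ in $(3^{2m+1}/2,\,3^{2m+1})$, resp.\ $(3^{2m+2}/2,\,3^{2m+2})$, while any base-$3$-binary integer below $3^{2m+1}$, resp.\ $3^{2m+2}$, is at most $(3^{2m+1}-1)/2$, resp.\ $(3^{2m+2}-1)/2$; the paper instead argues geometrically that the line of slope $9^{\{n\log 4/\log 9\}}$ misses the product Cantor set, and your computation even confirms the endpoint $6.75$ appearing in the theorem statement (the paper's proof text writes $6.25$). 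The equidistribution step giving density $2\log_9 1.5=\log(2.25)/\log 9$ matches the paper's argument.

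The genuine gap is in part (1), precisely at the point you flag and defer. The Wu--Shmerkin bound $\overline{\dim}_{\mathrm{B}}(E_4\cap\lambda^{-1}E_3)\leq \log 2/\log 3-1/2$ holds for each fixed $\lambda$, but it does not by itself produce a covering-number bound at the specific scale $4^{-n}$ with constants uniform in $\lambda_n$ (the scale at which the asymptotic bound becomes effective may depend on $\lambda$, and here $\lambda_n$ moves with $n$). Moreover, the uniform quantitative intersection results that are actually available (and used elsewhere in the paper: Yu's Corollary 1.2, or Shmerkin's Theorem 1.11 plus Lemma 1.8) apply only in the subcritical regime where the dimensions sum to less than $1$, yielding $N^{\epsilon}$-type counts; here $\dim A_3+\dim A_4=\log 2/\log 3+1/2>1$, so no off-the-shelf ``uniform form'' gives the exponent $\log 2/\log 3-1/2$ directly. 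The paper's proof supplies the missing idea: decompose $A_3$ as a sumset $A_3=A_3'+A_3''$ by grouping ternary digit positions modulo $m$, with $\dim A_3'$ just below $1/2$, so that $\dim A_3'+\dim A_4<1$; then for each of the at most $C\,4^{n(\log 2/\log 3-0.49)}$ relevant translates $t\in A_3''$ (terminating ternary expansions with $O(n)$ digits), the uniform subcritical result bounds the points of $(l_n-(0,t))\cap(A_4\times A_3')$ at scale $4^{-n}$ by $4^{\epsilon n}$, uniformly in $t$ and in the slope; multiplying and letting $0.49\uparrow 0.5$ gives the stated exponent. Without this decomposition (or a genuine substitute for the uniformity you only ``expect''), your part (1) is a plan rather than a proof.
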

From the proof of this theorem it may be deduced that the converse of the last statement does not hold, a fact further discussed in Section \ref{discuss}.\\
	
	\begin{figure}[h]
		\includegraphics[width=0.75\linewidth, height=8cm]{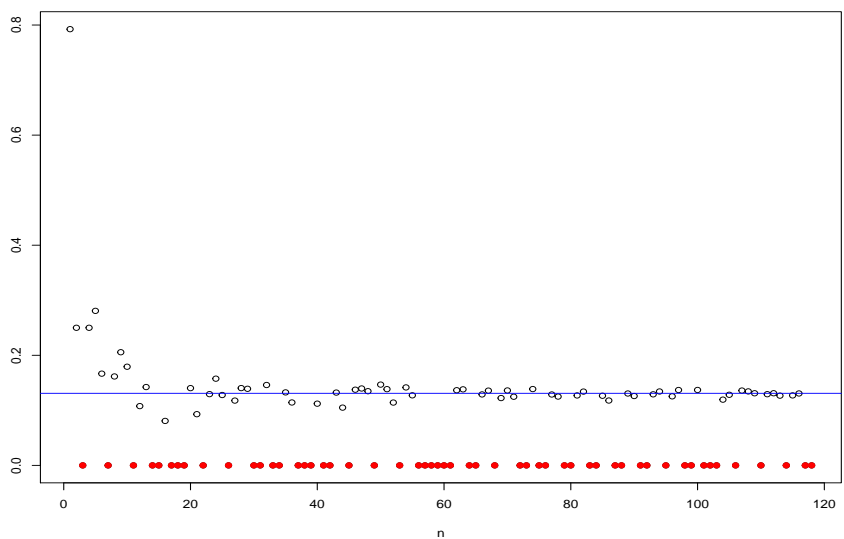} 
		\caption{A plot of $\log \mathcal{S}(n)/\log 4^n$ for $n\in\{1,\dots,118\}$. The horizontal line is $\{y=\log 2/\log 3-0.5\}$ and the lower dots indicate where $\mathcal{S}(n)=0$. According to this data, the density of $0$ in this range is approximately $0.4576271.$}
		\label{fig:figure 1}
	\end{figure}

	The third focus of this paper concerns a generalisation of the above setting, by introducing the notion of \emph{digit-special} numbers. We say real number $x \geq 0$ is {digit-special} if the expansion of $x$ in base  $b$ does not contain at least one digit from $0, 1,\dots, b-1$ for all $b \geq 3$. Our work in this broader direction relies on Schanuel's conjecture \cite{A71}, which we state below for convenience.
	\begin{conj}[Schanuel]\label{Schanuel}
		Let $x_1,\dots,x_n$ be  $\mathbb{Q}$-linearly independent complex numbers, the transcendence degree  of $\mathbb{Q}(x_1,\dots,x_n,e^{x_1},\dots,e^{x_n})$ is at least $n.$ 
	\end{conj}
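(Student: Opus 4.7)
The statement in question is Schanuel's conjecture itself, a famously open problem in transcendental number theory. Since the paper only invokes this conjecture as a hypothesis to derive conditional results about digit-special numbers, no genuine proof exists to propose. What I can offer is an honest sketch of the classical program one would pursue and the point at which every known approach stalls.

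The plan is to follow the standard blueprint of transcendence theory. The special case where $x_1,\dots,x_n$ are algebraic is precisely the Lindemann-Weierstrass theorem, and the case where the $e^{x_i}$ are algebraic is essentially Baker's theorem on linear forms in logarithms. I would attempt to interpolate between these extremes by constructing an auxiliary entire function $F(z)$ of controlled order, built from polynomials in the $x_i$ and $e^{x_i}$ with integer coefficients, and forcing $F$ to vanish to high order at many integer linear combinations of the $x_i$. Combining Cauchy-type upper bounds on $|F|$ with lower bounds extracted from a hypothetical algebraic dependency of low transcendence degree should, in the optimistic scenario, yield a contradiction.

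The step I expect to be the main obstacle, and indeed the point where every attempt has stalled for over fifty years, is the Liouville-type lower bound. The classical transcendence machinery depends on the quantitative principle that a nonzero algebraic number of bounded degree and height cannot be too small; no analogous lower bound is known when the $x_i$ are themselves transcendental, so the final ``zero estimate implies algebraic contradiction'' step has no closing move.

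A more modern route would invoke the functional analogue of Schanuel's conjecture proved by Ax, combined with the Pila-Wilkie counting theorem and the Pila-Zannier framework for unlikely intersections, with the aim of transferring functional transcendence to the arithmetic setting. Even with this powerful o-minimal toolkit, however, the gap between the functional statement and the number-theoretic one remains wide open. Given this state of affairs, the paper's decision to retain Schanuel as an assumption and to derive only conditional consequences for digit-special numbers is the only reasonable course.
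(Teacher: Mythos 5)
You are right that this statement is Schanuel's conjecture itself, which the paper states only as a working hypothesis (citing Ax) and never proves; your refusal to offer a proof, together with the survey of why all known transcendence methods stall, is exactly consistent with the paper's treatment. Nothing further is required here.
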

	Our primary result on digit-special numbers is the following, which, as an initial contribution, we hope will provoke further investigations into this rich topic. 
	\begin{thm}\label{MAIN}
		Assume Schanuel's conjecture. For each $\epsilon>0$, there is a constant $C_\epsilon>0$ such that for all $N\geq 1$, the number of digit-special integers smaller than $N$ is at most $C_\epsilon N^{\epsilon}.$ In addition, the Hausdorff dimension of the set of digit special numbers intersecting $[0,1]$ is zero.
	\end{thm}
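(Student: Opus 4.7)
Both parts of the statement follow from a common framework. For each $b\ge 3$, let $A_b(N)\subseteq[1,N]\cap\Z$ (respectively $E_b\subseteq[0,1]$) denote the integers (resp.\ reals) whose base-$b$ expansion misses at least one digit, so the set of digit-special integers below $N$ is $\bigcap_{b\ge 3}A_b(N)$ and the set of digit-special reals in $[0,1]$ is $\bigcap_{b\ge 3}E_b$. Standard single-base counts give $|A_b(N)|\lesssim_b N^{\log(b-1)/\log b}$ and $\Haus E_b=\log(b-1)/\log b$. Neither is close to the targets; the plan is to show that the missing-digit conditions across distinct bases are approximately independent, strongly enough that both the joint count and the joint Hausdorff dimension shrink to the claimed bounds.

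For the integer part, fix $\epsilon>0$ and a threshold $M=M(\epsilon)$ to be chosen. Decompose $A_b(N)=\bigcup_{d=0}^{b-1}A_{b,d}(N)$, where $A_{b,d}(N)$ is the set of $n\le N$ whose base-$b$ digits all avoid $d$, and cover the digit-special integers by $\bigcup_{(d_b)}\bigcap_{b=3}^{M}A_{b,d_b}(N)$, a union over at most $\prod_{b=3}^{M}b=O_\epsilon(1)$ tuples. Each $A_{b,d}(N)$ has density $\approx(1-1/b)^{k_b}$ with $k_b=\lfloor\log N/\log b\rfloor$, so the heuristic independent-product bound on a single tuple is
\[
\prod_{b=3}^{M}(1-1/b)^{k_b}\ =\ N^{-(1+o(1))\sum_{b=3}^{M}\log(b/(b-1))/\log b}.
\]
Since $\sum_{b\ge 3}\log(b/(b-1))/\log b\sim\sum 1/(b\log b)$ diverges, we can take $M=M(\epsilon)$ large enough that the exponent in $N$ exceeds $1-\epsilon$, which, multiplied by the $O_\epsilon(1)$ outer union, yields the required $O_\epsilon(N^\epsilon)$ bound once the independence is justified.

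The independence justification is where Schanuel enters. Applied to $\log 2,\log 3,\dots,\log M$, which are $\Q$-linearly independent by unique factorisation, Schanuel upgrades this to algebraic independence over $\Q$ and, via its conditional Diophantine consequences for linear forms in logarithms (sharper than the unconditional Baker--W\"ustholz range), implies that for any fixed tuple $(d_b)$ the residue distributions of the $A_{b,d_b}(N)$ at the distinct scales $b^{k_b}$ decouple quantitatively. A sieve or Fourier argument then upgrades the heuristic independent-product estimate to a rigorous one.

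The dimension claim runs in parallel. Each $E_b$ is a finite union of self-similar Cantor sets with contraction ratio $1/b$, so by the Shmerkin--Wu resolution of Furstenberg's intersection conjecture one has $\Haus(E_b\cap E_{b'})\le \Haus E_b+\Haus E_{b'}-1$ whenever $\log b/\log b'$ is irrational. Iterating across many bases under the Schanuel-supplied multi-base decoupling yields
\[
\Haus\Bigl(\bigcap_{b=3}^{M}E_b\Bigr)\le \max\Bigl(0,\ 1-\sum_{b=3}^{M}\log(b/(b-1))/\log b\Bigr),
\]
and the same divergence forces this to $0$ as $M\to\infty$. The principal obstacle in both parts is lifting pairwise decoupling (available from Baker--W\"ustholz for linear forms in two logs, and from Shmerkin--Wu for pairs of self-similar sets) to genuine multi-base joint decoupling, which is exactly what Schanuel is invoked to supply; a secondary technical point is that Schanuel most cleanly controls prime logarithms, so composite bases must be treated via the $\Q$-linear relations among $\log b$ and the $\log p$ for $p\mid b$.
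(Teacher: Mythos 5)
Your proposal reproduces the paper's skeleton—fix finitely many bases, decompose over all tuples of missing digits, and choose enough bases that the sum of the single-base dimensions drops below $k-1$ (equivalently, the density exponent exceeds $1-\epsilon$)—but the load-bearing step is missing. The ``independence justification'' you offer, namely that Schanuel's conjecture has ``conditional Diophantine consequences for linear forms in logarithms'' which make the residue distributions at scales $b^{k_b}$ decouple, after which ``a sieve or Fourier argument'' upgrades the heuristic product bound, is not an argument: Schanuel is a purely qualitative transcendence statement and yields no quantitative Diophantine information. In the paper it is used exactly once, through Lemma \ref{Sch}, to certify that $1,\log p_1/\log p_2,\dots,\log p_1/\log p_k$ are $\Q$-linearly independent; that is merely the \emph{hypothesis} of the decoupling theorem, not its content. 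The quantitative power comes entirely from the uniform Furstenberg-intersection results of Section \ref{UniSmall} (Shmerkin \cite{Sh}, Wu \cite{Wu}, and \cite{Y19}), applied via the renormalisation in Lemma \ref{Thinteger}: the digit-special integers in $[p_1^m,p_1^{m+1})$ are placed on a line segment inside a product of $\times p_i$-invariant Cantor sets, and the covering number of that slice is bounded uniformly in the line. Note also that the rigorous form of your ``heuristic independent-product estimate'' (count $\asymp N^{s-(k-1)}$) is precisely the paper's open Conjecture \ref{STRONG}; only the $O(N^\epsilon)$ bound in the regime $s<k-1$ is actually proved, and only by the slicing machinery.

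Two further concrete problems. First, you intersect over \emph{all} bases $3\le b\le M$, but every decoupling input requires the bases to be multiplicatively independent; pairs such as $(3,9)$ or $(4,16)$ have $\log b/\log b'\in\Q$ and contribute nothing. You flag this at the end but do not resolve it, and the resolution is not cosmetic: the paper restricts to the odd primes together with $4$, and this matters because $\sum_{p \text{ odd prime}}\bigl(1-\log(p-1)/\log p\bigr)$ is a \emph{convergent} sum of size roughly $0.92<1$, so primes alone do not suffice; including base $4$ pushes the total to $\approx 1.0956$, and already the first $26$ admissible bases give $>1.001$. Your appeal to divergence of $\sum_b 1/(b\log b)$ over all integers $b$ hides exactly the case split that makes the theorem work. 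Second, for the Hausdorff-dimension claim you propose iterating the pairwise bound $\Haus(E_b\cap E_{b'})\le\Haus E_b+\Haus E_{b'}-1$ across many bases; this does not iterate, since $E_b\cap E_{b'}$ is not invariant under any $\times p\bmod 1$ map, and the paper states explicitly that for $k\ge3$ the two-set results of \cite{Sh} cannot be used--one needs the genuinely higher-dimensional theorem of \cite{Y19} (this is what Lemma \ref{Thfraction} invokes). None of these gaps is filled by Schanuel's conjecture.
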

	
These results also find connections to a famous question asked by Graham and a related conjecture of Pomerance.
	
	\begin{ques}[Graham's $\$1000$ problem \footnote{According to \cite{OEIS7}, Graham offers $ \$1000$ to the first person with a solution.}]
		Are there infinitely many integers $n\geq 1$ such that the binomial coefficient $\binom{2n}{n}$ is coprime with $105=3\times 5\times 7?$
	\end{ques}
	This problem is currently open but has seen significant attention. Notably, in 1975, Erd\H{o}s, Graham, Ruzsa and Straus showed the following result.
	\begin{thm}[Two prime factor theorem \cite{EGRS75}]\label{EGRS}
		Let $p,q$ be integers greater than $1$. If $A,B$ are two positive integers satisfying
		\[
		\frac{A}{p-1}+\frac{B}{q-1}\geq 1,
		\]
		then there exist infinitely many integers whose base $p$ expansion contains only digits $\leq A$ and base $q$ expansion contains only digits $\leq B.$
	\end{thm}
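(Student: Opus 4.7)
My plan is to produce infinitely many such integers by an inductive construction in which the hypothesis $A/(p-1)+B/(q-1)\geq 1$ plays the role of a Minkowski-type slack condition. The starting observation is that any integer of the form $N=\sum_{i=1}^{\ell}\varepsilon_i q^{k_i}$, with $0\leq \varepsilon_i\leq B$ and distinct nonnegative integers $k_i$, has every base-$q$ digit lying in $\{0,1,\dots,B\}$ automatically, so the problem reduces to choosing the pairs $(\varepsilon_i,k_i)$ in order to engineer the base-$p$ expansion.

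I would build $N$ by adding one summand at a time, working from the lowest base-$p$ digit upward. Suppose after $n$ steps the partial sum $S_n$ has its lowest-order base-$p$ digits all $\leq A$; the aim of step $n+1$ is to append a new term $\varepsilon q^{k}$ that either fixes the next base-$p$ digit or at worst only shifts the problem further up. Since $q^{k}$ itself has a complicated base-$p$ expansion with digits as large as $p-1$, this perturbs many positions simultaneously, and one must use the slack $A/(p-1)+B/(q-1)-1\geq 0$ to select $\varepsilon\in\{0,\dots,B\}$ and $k$ large enough that the resulting digit pattern lies in $\{0,\dots,A\}$ while leaving room for future corrections. Equidistribution of $\{q^{k}\bmod p^{m}\}$ in the multiplicatively-independent case, and a direct $p$-adic analysis otherwise, provides the needed freedom in $k$.

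The main obstacle is the control of \emph{carries}: adding $\varepsilon q^{k}$ influences many base-$p$ positions at once, so a correction intended for one digit can easily destroy digits stabilized earlier. Handling this rigorously is the technical heart of the argument, and I would isolate it as a quantitative lemma relating the Cantor-like sets $\{\sum_i d_i p^{-i}:d_i\leq A\}\subseteq [0,A/(p-1)]$ and $\{\sum_j e_j q^{-j}:e_j\leq B\}\subseteq [0,B/(q-1)]$: the sumset of these two sets covers $[0,1]$ precisely under the hypothesis, and tracking the carry dynamics transports this geometric statement into the required digit-level bound on the induction. Once such a carry lemma is in hand, the construction continues indefinitely and produces the infinite family of integers claimed.
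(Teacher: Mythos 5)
A point of reference first: the paper does not prove this statement at all --- it is quoted verbatim as a known theorem of Erd\H{o}s, Graham, Ruzsa and Straus \cite{EGRS75}, so there is no in-paper argument to compare yours against; your proposal has to be judged as a standalone proof of the 1975 theorem.

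Judged that way, it has a genuine gap: the entire technical content is deferred. You correctly reduce to sums $N=\sum_i \varepsilon_i q^{k_i}$ with $0\leq\varepsilon_i\leq B$ (which trivially handles base $q$) and you correctly identify carry control in base $p$ as ``the technical heart'', but you then only promise a ``carry lemma'' without stating or proving it, so what remains is a plan rather than a proof. Moreover, the one concrete mechanism you offer for the inductive step does not work as described: if the lowest $m$ base-$p$ digits of a partial sum $S_n$ have been stabilized and you add $\varepsilon q^{k}$, those digits are perturbed by the residue $\varepsilon q^{k}\bmod p^{m}$ (plus carries), and for $\gcd(p,q)=1$ the residues $q^{k}\bmod p^{m}$ do \emph{not} equidistribute in $\mathbb{Z}/p^{m}\mathbb{Z}$: they run through a fixed cyclic subgroup of the units, are eventually periodic in $k$, and are never divisible by $p$, so taking $k$ large gives essentially no freedom over the low-order perturbation and previously fixed digits can be destroyed with no repair available inside your scheme. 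Equidistribution of $\{k\log q/\log p\}$ controls the \emph{leading} base-$p$ digits of $q^{k}$, not the trailing ones that a bottom-up induction needs. Your geometric observation, by contrast, is essentially sound: the covering $\{\sum_i d_i p^{-i}:d_i\leq A\}+\{\sum_j e_j q^{-j}:e_j\leq B\}\supseteq[0,1]$ amounts to the thickness inequality $AB\geq (p-1-A)(q-1-B)$, which is just a rewriting of $A/(p-1)+B/(q-1)\geq 1$. But transporting that statement about real numbers to \emph{integers} --- producing infinitely many integer points while simultaneously controlling carries in both bases --- is precisely the content of the EGRS theorem, and it is exactly the part your proposal leaves unproven.
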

	Later it will be clear that the following condition seems to be more canonical for this type of problem:
	\[
	\frac{\log (A+1)}{\log p}+\frac{\log (B+1)}{\log q}\geq 1,
	\]
	although we have not yet proved an analogous result with this condition. The connection between prime factors of binomial coefficients and digit expansions is due to Kummer \cite{K52}, who proved that a prime number does not divide $\binom{2n}{n}$ if and only if the $p$-ary expansion of $n$ contains only digits less than or equal to $(p-1)/2$. Thus by the two prime factor theorem of Erd\H{o}s, Graham, Ruzsa and Straus we see that for any two different odd prime numbers $p,q,$ there are infinitely many integers $n$ such that $\binom{2n}{n}$ is coprime with $p$ and $q$.\\
	
	In approaching Graham's problem, it is natural to first consider alternative or related forms. In \cite[Section 4]{P15}, Pomerance gave a heuristic argument that leads to the following conjecture.
	\begin{conj}\label{P}
		Let $N$ be an integer. Denote $G(N)$ to be the number of positive integers $n\leq N$ such that $\binom{2n}{n}$ is coprime with $105$. Then 
		\[
		N^{0.025}\leq G(N)\leq N^{0.026}
		\]
		for all large enough $N.$
	\end{conj}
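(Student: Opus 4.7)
The plan is to attack the conjecture through its two inequalities separately, first applying Kummer's theorem to convert the divisibility condition into a simultaneous digit restriction. Explicitly, $\binom{2n}{n}$ is coprime with $105 = 3 \cdot 5 \cdot 7$ if and only if the base-$3$ expansion of $n$ uses only digits in $\{0,1\}$, the base-$5$ expansion uses only digits in $\{0,1,2\}$, and the base-$7$ expansion uses only digits in $\{0,1,2,3\}$. The corresponding heuristic ``dimensions'' are $\log 2/\log 3$, $\log 3/\log 5$, and $\log 4/\log 7$, whose sum is approximately $2.025$. Under an independence heuristic, the joint constraint carves out a set of dimension $0.025$ in the unit cube, which is precisely the exponent predicted by Pomerance, and this is the picture the proof must make rigorous in both directions.

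For the upper bound $G(N) \leq N^{0.026}$, I would adapt the strategy underlying Theorem \ref{A146025}, which handles a two-base version of this problem. The idea is to fix the top-order digits in one base, derive strong constraints on the low-order digits in the other two bases, and then invoke transcendence-type input (Baker-style linear forms in logarithms, or the Schanuel-conditional framework of Theorem \ref{MAIN}) to rule out excess three-way coincidences. The $\epsilon$-slack of $0.001$ in the exponent is consistent with the $N^\epsilon$-loss that transcendence arguments typically incur, so this direction appears in principle within reach of techniques already developed in the paper, with the main new work being the careful tracking of three simultaneous base constraints rather than two.

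The lower bound $G(N) \geq N^{0.025}$ is the genuine obstacle, being strictly stronger than Graham's \$1000 problem of exhibiting infinitely many such $n$. Theorem \ref{EGRS} of Erd\H{o}s--Graham--Ruzsa--Straus produces dense families when the relevant dimension sum exceeds $1$ for two bases, but for three bases our sum $2.025$ exceeds $2$ only narrowly, so those techniques degenerate at the threshold. A plausible route is a digit-by-digit construction, selecting high-order digits in one base so as to force the induced digits in the other two bases into the allowed ranges, controlled by equidistribution of $\{n \log 3 / \log 5\}$ and $\{n \log 3 / \log 7\}$; a second route is a probabilistic/pigeonhole argument on admissible words, though pairwise near-independence of the three digit constraints would need to be established. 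The main obstacle is exactly this: producing even one infinite family of solutions is essentially Graham's problem, and obtaining the sharp density $N^{0.025}$ appears well beyond current unconditional techniques, so I expect any real progress to come only on the upper bound.
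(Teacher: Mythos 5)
You are right not to offer a proof here, because the statement you were given is Conjecture \ref{P} (Pomerance's conjecture), which the paper itself does not prove: it is supported only by Pomerance's heuristic, and the paper's actual contribution is the one-sided, partly conditional Theorem \ref{PP}, namely $G(N)\leq N^{0.073}$ unconditionally (via Theorem \ref{three choices}) and $G(N)\leq N^{0.026}$ under Schanuel's conjecture, together with the observation that Conjecture \ref{STRONG} would upgrade this to the full two-sided bound. Your assessment matches this situation closely: the Kummer reduction to digits $\leq 1,2,3$ in bases $3,5,7$, the exponent $\log 2/\log 3+\log 3/\log 5+\log 4/\log 7-2\approx 0.0259$, and the judgement that the lower bound $G(N)\geq N^{0.025}$ is strictly harder than Graham's problem and out of reach, are all exactly the paper's own position.

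Two points of divergence on the upper bound are worth noting. First, the mechanism is not ``fix top-order digits in one base and apply Baker-style linear forms in logarithms''; the paper's engine is the uniform Furstenberg-type slicing result for $\times p_1,\dots,\times p_k$ invariant sets recalled in Section \ref{UniSmall} (from Shmerkin, Wu and Yu), into which transcendence input enters only to certify that $1,\log 3/\log 5,\log 3/\log 7$ (or a substitute triple from $\{7,11,13\}$, via the Six Exponentials Theorem in Theorem \ref{three choices}) are $\mathbb{Q}$-linearly independent. Second, since the dimension sum $\approx 2.0259$ exceeds the threshold $k-1=2$, the direct argument of Lemma \ref{Thinteger} does not apply; the paper instead needs the sumset decomposition device from the proof of Theorem \ref{A230360} to split one invariant set and bring the relevant sum below the threshold, which is where the exponent $s-(k-1)\approx 0.0259<0.026$ comes from, not from an $N^{\epsilon}$ transcendence loss. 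On the lower bound your diagnosis is correct and complete: the paper offers nothing beyond Conjecture \ref{STRONG}, so there is no gap to fill relative to the paper, only an open problem.
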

	Moreover, in \cite[page 639]{P15} Pomerance asks
	\begin{align*}
	\textnormal{``...why would the base-$p$ expansion of $n$ have nothing to do with the base-$q$ }\\
	\textnormal{expansion when $p$ and $q$ are different primes?"}
	\end{align*}
	We partially answer this question and Conjecture \ref{P} in Theorem \ref{PP}, the proof of which may be found in Section \ref{discuss}. For now, this theorem is dependent upon Schanuel's conjecture, due to the important consequence that
	\[
	1, \log 3/\log 5, \log 3/\log 7
	\]
	are then $\mathbb{Q}$-linearly independent.
	
	\begin{thm}\label{PP}
		Let $N$ be an integer and $G(N)$ denote the number of positive integers $n\leq N$ such that $\binom{2n}{n}$ is coprime with any choice of  three different prime numbers. Then we have
		\[
		G(N)\leq N^{0.073}
		\]
		for all large enough $N.$ Furthermore, assuming Schanuel's conjecture, we have
		\[
		G(N)\leq N^{0.026}
		\]
		for all large enough $N.$ 
	\end{thm}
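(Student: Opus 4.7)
The plan is to translate the divisibility condition into a constraint on digit expansions via Kummer's theorem, which asserts that a prime $p$ fails to divide $\binom{2n}{n}$ precisely when every base-$p$ digit of $n$ lies in $\{0, 1, \ldots, (p-1)/2\}$. Hence, if $\binom{2n}{n}$ is coprime to three distinct primes $p_1 < p_2 < p_3$ (necessarily odd, since $p_1 = 2$ would force $n = 0$), then $n$ is digit-restricted in all three bases. The count of such $n$ is monotonically worsened by taking the $p_i$ as small as possible, so it suffices to handle the extremal case $(p_1, p_2, p_3) = (3, 5, 7)$, where the admissible digits are $\{0,1\}$, $\{0,1,2\}$ and $\{0,1,2,3\}$ respectively.

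Next, I would adapt the multi-scale covering argument underlying Theorems \ref{A146025} and \ref{MAIN}. The naive (sum-of-dimensions) exponent for the triple intersection is
\[
s := \frac{\log 2}{\log 3} + \frac{\log 3}{\log 5} + \frac{\log 4}{\log 7} - 2 \approx 0.0259.
\]
To realise $s$ as a genuine upper bound I would choose sequences $a_k, b_k, c_k \to \infty$ for which $3^{a_k}, 5^{b_k}, 7^{c_k}$ become arbitrarily close, cut $[0, N]$ at these synchronised scales, and count admissible digit patterns in all three bases simultaneously. The ability to produce such triples with error tending to zero is equivalent to a quantitative $\mathbb{Q}$-linear independence of $1, \log 3/\log 5, \log 3/\log 7$. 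Schanuel's conjecture implies exactly this independence (as noted immediately before the theorem statement), and an effective simultaneous Diophantine approximation then yields $G(N) \le N^{0.026}$, matching $s$ up to rounding.

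Unconditionally only Baker's theorem is available, which supplies $\mathbb{Q}$-linear independence of $\log 3, \log 5, \log 7$ but not of the ratio triple $1, \log 3/\log 5, \log 3/\log 7$ required to attain $s$. The synchronisation of scales then carries an unavoidable slack, which one absorbs into the exponent; the stated value $N^{0.073}$ arises by optimising the trade-off between the accuracy of synchronisation afforded by Baker and the combinatorial count of admissible digit strings at those scales.

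The principal obstacle is making the synchronisation step rigorous, i.e.\ turning the appropriate $\mathbb{Q}$-linear independence into a covering estimate with controlled error, and propagating that error through the triple count. Under Schanuel the Diophantine quality is strong enough to attain the sharp exponent $s$; without Schanuel the residual error terms from Baker-type bounds force a weaker exponent, which is where the $N^{0.073}$ originates. I do not anticipate essentially new ideas beyond those already developed in Theorems \ref{A146025} and \ref{MAIN}, but the three-base bookkeeping and the two parallel bounds (conditional and unconditional) introduce the main technical content.
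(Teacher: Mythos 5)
Your opening step (Kummer's theorem, reducing coprimality to the digit restriction ``all base-$p$ digits $\le (p-1)/2$'') and the numerology $s=\frac{\log 2}{\log 3}+\frac{\log 3}{\log 5}+\frac{\log 4}{\log 7}-2\approx 0.0259$ match the paper, but the engine you propose cannot prove the bound. Synchronising scales $3^{a_k}\approx 5^{b_k}\approx 7^{c_k}$ and ``counting admissible digit patterns in all three bases simultaneously'' gives, without further input, only the minimum of the three single-base counts, i.e.\ roughly $N^{\log 2/\log 3}$; to multiply the three densities and land at $N^{s+\epsilon}$ one needs genuine independence between digit statistics in multiplicatively independent bases, which is precisely the content of the Furstenberg intersection problem. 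The paper's argument runs through the uniform slicing/intersection theorem of Shmerkin--Wu--Yu quoted in Section \ref{UniSmall} (with $k=3$), not through Diophantine approximation of the logarithms; indeed Schanuel supplies only the \emph{qualitative} $\mathbb{Q}$-linear independence of $1,\log 3/\log 5,\log 3/\log 7$ (via Lemma \ref{Sch}), no effective approximation, and none is needed. Moreover, since the dimension sum $\approx 2.0259$ exceeds $k-1=2$, that theorem does not even apply directly: one must first decompose one of the three invariant sets as a sumset $A'+A''$ exactly as in the proof of Theorem \ref{A230360}, apply the uniform $N^{\epsilon}$ covering bound to each translate by elements of $A''$, and recover the excess $s$ as the exponent counting the translates. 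This decomposition step, which is where the exponent $0.026$ actually comes from, is absent from your plan.

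The unconditional part is also off target. Your reduction ``the count is monotonically worsened by taking the $p_i$ as small as possible, so it suffices to treat $(3,5,7)$'' is unjustified: the digit-restricted sets for different prime triples are not nested, and in the dimension heuristic smaller primes give a \emph{smaller} excess, so $(3,5,7)$ is the most favourable case rather than the extremal one. In the paper the exponent $0.073$ has nothing to do with a Baker-type trade-off between synchronisation accuracy and digit counts; it comes from the Six Exponentials Theorem via Theorem \ref{three choices}: unconditionally at least one of the triples $(3,5,7),(3,5,11),(3,5,13)$ is strongly multiplicatively independent, and the worst case is base $13$ (digits $\le 6$), giving $\frac{\log 2}{\log 3}+\frac{\log 3}{\log 5}+\frac{\log 7}{\log 13}-2\approx 0.0722<0.073$. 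Your Baker-based route produces neither the required independence of the ratio triple nor any concrete exponent, so both halves of the proposed proof have a genuine gap at the decisive step.
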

It is likely that one can prove the necessary $\mathbb{Q}$-linear independence directly without having to prove the more general Schanuel's conjecture. We are able to make some progress along these lines and in Section \ref{proof three choices} show the following.
	\begin{thm}\label{three choices}
		The triple
		\[1, \log 3/\log 5, \log 3/\log n\]
		is $\mathbb{Q}$-linearly independent for at least one $n \in \{7, 11, 13\}$.
	\end{thm}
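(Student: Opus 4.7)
I argue by contradiction. Suppose that for every $n \in \{7, 11, 13\}$ the triple $(1, \log 3/\log 5, \log 3/\log n)$ is $\mathbb{Q}$-linearly dependent, so that for each such $n$ there exist rationals $(a_n, b_n, c_n) \neq (0, 0, 0)$ with
\[
a_n + b_n \frac{\log 3}{\log 5} + c_n \frac{\log 3}{\log n} = 0.
\]
Multiplying by $\log 5 \log n$ and invoking the $\mathbb{Q}$-linear independence of the distinct prime logarithms $\log 3, \log 5, \log n$ forces $c_n \neq 0$. Normalising $c_n = 1$ and dividing by $\log 3 \log 5 \log n$ yields
\[
\frac{1}{\log n} = -\frac{a_n}{\log 3} - \frac{b_n}{\log 5},
\]
so that each of $1/\log 7, 1/\log 11, 1/\log 13$ lies in the two-dimensional $\mathbb{Q}$-vector space $W := \mathbb{Q}\cdot 1/\log 3 + \mathbb{Q}\cdot 1/\log 5$. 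Three elements of a two-dimensional $\mathbb{Q}$-space are $\mathbb{Q}$-linearly dependent, producing non-trivial rationals $\lambda_7, \lambda_{11}, \lambda_{13}$ with
\[
\lambda_7 \log 11 \log 13 + \lambda_{11} \log 7 \log 13 + \lambda_{13} \log 7 \log 11 = 0.
\]
A short pair-by-pair analysis, using unique factorisation for integers, shows each $\lambda_n \neq 0$.

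The second step is to contradict this consequence via the Six Exponentials Theorem. The individual relations in Step~1 repackage as
\[
n^{\log \mu_n} = 3^{\log 5}, \qquad \mu_n := 5^{-a_n} 3^{-b_n} \in \overline{\mathbb{Q}}^{\times}, \qquad n \in \{7, 11, 13\},
\]
so that the number $\rho := 3^{\log 5}$ has three distinct representations as $n^{\log \mu_n}$ with $\mu_n$ algebraic. I plan to take $x_1, x_2$ to be any two of $\log \mu_7, \log \mu_{11}, \log \mu_{13}$ (say $\log \mu_7$ and $\log \mu_{11}$) and $y_1, y_2, y_3 = \log 7, \log 11, \log 13$. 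The $\mathbb{Q}$-linear independence of the $y_j$ is the classical independence of distinct prime logarithms; that of $x_1, x_2$ follows from the observation that a relation $\log \mu_7 = r \log \mu_{11}$ with $r \in \mathbb{Q}$ would, after exponentiating using $n^{\log \mu_n} = \rho$, force $7^r = 11$, which is impossible. The six exponentials $e^{x_i y_j}$ then comprise two copies of $\rho$ on the diagonal together with four off-diagonal quantities of shape $\rho^{\log p/\log q}$ for $p, q \in \{7, 11, 13\}$ with $p \neq q$. If the full system of three identities $n^{\log \mu_n} = \rho$ can be combined to force the algebraicity of all six of these exponentials, the Six Exponentials Theorem yields the desired contradiction.

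The principal obstacle is exactly this last step: leveraging the \emph{full} three-relation system from Step~1 --- rather than only its derived one-dimensional consequence --- to pin down algebraicity of $\rho$ and each $\rho^{\log p/\log q}$. I expect this is where the availability of \emph{three} choices of $n$ (mirroring the ``three'' in the Six Exponentials Theorem) is essential: it supplies one more algebraic identity than is extracted from the rational dependence of $1/\log 7, 1/\log 11, 1/\log 13$ alone, and that extra identity is precisely what bridges the gap between the unconditional statement and the conjectural statement for a single $n$ requiring Schanuel.
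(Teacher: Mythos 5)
Your argument has a genuine gap, and it is exactly the one you flag at the end: your chosen Six Exponentials configuration cannot be closed. With $x_1,x_2\in\{\log\mu_7,\log\mu_{11}\}$ and $y_j=\log 7,\log 11,\log 13$, a contradiction requires \emph{all six} numbers $e^{x_iy_j}$ to be algebraic, but these are $\rho=3^{\log 5}$ and the four numbers $\rho^{\log p/\log q}$, and nothing in your hypotheses makes any of them algebraic: the identities $n^{\log\mu_n}=\rho$ only express $\rho$ as an exponential with algebraic data $\mu_n$, not as an algebraic number. Likewise, the derived relation $\lambda_7\log 11\log 13+\lambda_{11}\log 7\log 13+\lambda_{13}\log 7\log 11=0$ (i.e.\ $\mathbb{Q}$-dependence of $1/\log 7,1/\log 11,1/\log 13$) is not known to be refutable unconditionally, and passing to it discards precisely the rational coefficients $a_n,b_n$ that carry the usable arithmetic information. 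So the proof as written does not terminate.

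The missing observation is that each assumed dependence \emph{already} produces an algebraic exponential of the right shape. From $a_n+b_n\log 3/\log 5+\log 3/\log n=0$, multiply by $\log n$: if $b_n=0$ then $\log 3/\log n\in\mathbb{Q}$, impossible; if $b_n\neq 0$ then
\[
\exp\!\left(\frac{\log 3\,\log n}{\log 5}\right)=3^{-1/b_n}\,n^{-a_n/b_n},
\]
which is algebraic. Hence your contradiction hypothesis makes all three of $\exp(\log 3\log n/\log 5)$, $n\in\{7,11,13\}$, algebraic. Now apply the Six Exponentials Theorem in the configuration $(x_1,x_2,x_3)=(\log 7/\log 5,\log 11/\log 5,\log 13/\log 5)$ and $(y_1,y_2)=(\log 3,\log 5)$ (independence on each side follows from unique factorisation): the exponentials in the $y_2$-column are the integers $7,11,13$, so at least one of the $y_1$-column numbers $\exp(\log 3\log n/\log 5)$ must be transcendental, which is the desired contradiction. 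This is in substance the paper's proof, stated contrapositively; your Step~1 normalisation ($c_n\neq 0$, $c_n=1$) is fine, but the detour through $1/\log 7,1/\log 11,1/\log 13$ and through $\rho$ should be dropped in favour of the direct extraction above.
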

\begin{rem}
	Theorem \ref{three choices} may easily be generalized to show that for each choice of three primes numbers $p_1,p_2,p_3\geq 7,$ at least one of them, say, $n$, is such that
	\[1, \log 3/\log 5, \log 3/\log n\]
	are $\mathbb{Q}$-linearly independent.
\end{rem}
	There are yet more interesting stories in this direction and we postpone further discussion until Section \ref{discuss}.
	
\section{Preliminaries}
	In this section we introduce the required definitions and results from existing literature.
	
	\subsection{Densities of integer sequences}\label{DEN}
	The notion of density describes the size of subsets of $\N$. Let $W\subset\mathbb{N}$ be a sequence of natural numbers and define
	\[
	\#_nW=\#\{i\in [1,n]: i\in W\}.
	\]
	Then, the upper natural density of $W$ is
	\[
	\overline{d}(W)=\limsup_{n\to\infty} \frac{\#_nW}{n},
	\]
	and the lower natural density is given by
	\[
	\underline{d}(W)=\liminf_{n\to\infty} \frac{\#_nW}{n}.
	\]
	If these two numbers coincide we call the common value, denoted $d(W)$, the natural density of $W$.
	\subsection{Dimensions}\label{DIM}
	Dimension is another standard way of quantifying the size of a set. There are numerous notions, but our focus is the Hausdorff and box dimensions. For an in-depth introduction, see \cite[Chapters 2,3]{Fa} and \cite[Chapters 4,5]{Ma1}. 
	
	\subsubsection{Hausdorff dimension}
	
	For all $\delta>0$ and $s > 0$, define the $\delta$-approximate $s$-dimensional Hausdorff measure of a set $F \subseteq \R^n$ by
	\[
	\mathcal{H}^s_\delta(F)=\inf\left\{\sum_{i=1}^{\infty}\mathrm{diam}(U_i)^s: \bigcup_i U_i\supset F, \mathrm{diam}(U_i)\leq \delta\right\},
	\]
	and the $s$-dimensional Hausdorff measure of $F$ by
	\[
	\mathcal{H}^s(F)=\lim_{\delta\to 0} \mathcal{H}^s_{\delta}(F).
	\]
	The Hausdorff dimension of $F$, denoted $\Haus F$, is then given by
	\[
	\Haus F=\inf\{s\geq 0:\mathcal{H}^s(F)=0\}=\sup\{s\geq 0: \mathcal{H}^s(F)=\infty          \}.
	\]
	\subsubsection{Box dimensions}\label{box dimension}
	Let $N(F, r)$ denote the smallest number of cubes of side length $r > 0$ required to cover $F \in \R^n$. The upper box dimension of a bounded set $F$ is
	\[
	\ubox F=\limsup_{r\to 0} \left(-\frac{\log N(F,r)}{\log r}\right),
	\]
	and the lower box dimension of $F$ is
	\[
	\lbox F=\liminf_{r\to 0} \left(-\frac{\log N(F,r)}{\log r}\right).
	\]
	If $\lbd F = \ubd F$, then we call the common value, denoted $\bd F$, the box dimension of $F$. It is easy to see that for all $F \subseteq \R^n$,
	$$
	\hd F \leq \lbd F \leq \ubd F.
	$$
	\subsection{Invariant sets}\label{INV}
	Given an integer $k\geq 2$, let $A_k$ denote an arbitrary closed $\times k\mod 1$ invariant subset of $[0,1]$. That is to say, $a\in A_k$ implies $\{ka\}\in A_k$ for all $a \in A_k$, where $\{x\}$ is the fractional part of $x$. We say that $A_k$ is strictly invariant if $a\in A_k$ if and only if $\{ka\}\in A_k$. For each closed $\times k\mod 1$ invariant set $A_k$, it is known that $\Haus A_k=\ubox A_k$ \cite[Theorem 5.1]{Fu}. In particular, for any integers $k,l\geq 2$, and closed $\times k,l\mod 1$ invariant sets $A_k,A_l$, we have $\Haus A_k\times A_l=\ubox A_k\times A_l.$
	
	\subsection{Equidistribution}\label{Eqularge}
	Let $T$ be a compact metric space and $\mu\in\mathcal{P}(T)$ be a Borel probability measure. Let $X=\{x_n\}_{n\geq 1}$ be a sequence in $T.$ We say that $X$ equidistributes in $T$ with respect to $\mu$ if for each closed metric ball $B\subset T$,
	\[
	\lim_{N\to\infty} \frac{1}{N} \sum_{n=1}^N \mathbbm{1}_{B}(x_n)=\mu(B).
	\]
	Suppose that $X'= \{x_{i_k}\}_{k\geq 1}$ is a subsequence of $X$ such that $\{i_k\}_{k\geq 1}$ has positive upper natural density $\rho > 0$. $X'$ might not be equidistributed, but we may still consider the $\mu$ measure of its closure in $T$ in some special cases. For example, it is not too hard to show that if $T$ is the $n$-dimensional torus and $\mu$ denotes Lebesgue measure, then $\mu(\overline{X'})\geq \rho$.
	
	\subsection{Dipole directions}\label{DD}
	Let $A\subset\mathbb{R}^n$ be compact and consider
	\[
	DD(A)=\{(x-y)/|x-y|:  x,y\in A, |x-y|>0.001\}\subset S^{n-1}.
	\]
	From \cite[Section 4.3]{Y19}, we know that $\ubox A\geq 0.5\ubox DD(A)$. Moreover, if $x\in\mathbb{R}^n$ is a fixed point, then
	\[
	DD(A,x)=\{(x-y)/|x-y|:  y\in A, |x-y|>0.001\}\subset S^{n-1}
	\]
	has dimension $\ubox DD(A,x)\leq \ubox A.$
	%
	
	\subsection{Intersections of invariant sets}\footnote{As a side note, we mention that the result we discuss here actually partially resolves the quoted question of Pomerance in Section \ref{introsec}.}\label{UniSmall}
	The methods we use in this paper rely heavily on the following result from \cite{Y19}, which is a uniform and higher dimensional version of a deep result concerning the Furstenberg problem \cite{Fu2} proven in \cite{Sh} and \cite{Wu}.\\
	
	Let $k\geq 2$ be an integer and $A_{p_1},\dots, A_{p_k}$ be closed invariant subsets of $[0,1]$ with respect to $\times p_1 \mod 1, \times p_2 \mod 1,\dots,\times  p_k \mod 1$, respectively. Assume that $\log p_1/\log p_i$ for $i\in\{2,\dots,k\}$ are irrational numbers which are linearly independent over $\mathbb{Q}$. If $$\sum_{i=1}^k \Haus A_{p_i}<k-1,$$ then for each $2k$-tuple $u_1,\dots, u_k, v_1,\dots,v_k$ of non-zero real numbers we have
	\[
	\ubox \cap_{i=1}^k (u_i A_{p_i}+v_i)=0
	\]  
	by \cite{Y19}. Moreover, for $\delta>0$, if $\delta<|u_i|<\delta^{-1}$ for each $i\in\{1,\dots,k\}$, then for each $\epsilon>0$ there is an integer $N_\epsilon>0$ such that 
	\[
	N(\cap_{i=1}^k (u_i A_{p_i}+v_i),2^{-N})\leq N^{\epsilon}
	\]
for all $N\geq N_\epsilon$, where $N(\dots)$ denotes the box covering number (see Section \ref{box dimension} for details). Note that the choice of $N_\epsilon$ does not depend on $u_i,v_i$. For most of the results in this paper, we do not need the full strength of the above result. In fact, our main results (Theorems \ref{A146025}, \ref{A230360}) only rely on the case $k=2.$ In this case, the above result is \cite[Corollary 1.2]{Y19}. Alternatively, one can apply \cite[Theorem 1.11, Lemma 1.8]{Sh}. For $k\geq 3,$ results in \cite{Sh} cannot be used here. Nonetheless, the result follows by modifying the proof of \cite[Theorem 10.1]{Y19} as described in the discussions found in \cite[Section 12.1]{Y19}.  
	\section{Schanuel's conjecture and proof of Theorem \ref{three choices}}\label{proof three choices}
	In this section, we use Schanuel's conjecture to show $\mathbb{Q}$-linealy independence among ratios of integer logarithms.
	\begin{lma}\label{Sch}
		Assume Schanuel's conjecture. Let $k\geq 3$ be an integer. If $p_1,\dots,p_k$ are integers such that $$p^{n_1}_1\dots p^{n_k}_{k}=1 \,\,\,\,\,\,(n_i \in \mathbb{Z})$$ implies $n_1=\dots=n_k=0$, then 
		\[
		1,\frac{\log p_1}{\log p_2},\dots,\frac{\log p_1}{\log p_k}
		\]
		are $\mathbb{Q}$-linearly independent.
	\end{lma}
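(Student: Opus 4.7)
My plan is to deduce from Schanuel's conjecture that $\log p_1, \ldots, \log p_k$ are algebraically independent over $\mathbb{Q}$, which is much stronger than the $\mathbb{Q}$-linear independence we need, and then clear denominators.

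First I would use the hypothesis. The multiplicative independence assumption $p_1^{n_1} \cdots p_k^{n_k}=1 \Rightarrow n_1=\dots=n_k=0$ is, after taking logarithms, exactly the statement that $\log p_1, \ldots, \log p_k$ are $\mathbb{Q}$-linearly independent. This is the input needed to apply Schanuel's conjecture with the choice $x_i = \log p_i$ for $i=1,\dots,k$. The conclusion of Schanuel is that
\[
\mathbb{Q}(\log p_1,\ldots,\log p_k,e^{\log p_1},\ldots,e^{\log p_k}) = \mathbb{Q}(\log p_1,\ldots,\log p_k)
\]
has transcendence degree at least $k$ over $\mathbb{Q}$, since the $e^{x_i}=p_i$ lie in $\mathbb{Q}$. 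Hence $\log p_1,\ldots,\log p_k$ are algebraically independent over $\mathbb{Q}$.

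Next I would convert a hypothetical rational linear dependence of the quotients into a polynomial relation. Suppose that there exist rationals $a_0,a_1,\dots,a_{k-1}$, not all zero, with
\[
a_0+\sum_{i=2}^{k} a_{i-1}\,\frac{\log p_1}{\log p_i}=0.
\]
Multiplying through by $\prod_{i=2}^{k}\log p_i$ yields the polynomial identity
\[
a_0\prod_{i=2}^{k}\log p_i+\sum_{i=2}^{k} a_{i-1}\,\log p_1 \prod_{\substack{j=2\\ j\neq i}}^{k}\log p_j=0
\]
in $\mathbb{Q}[\log p_1,\ldots,\log p_k]$. The monomial attached to $a_0$ is $\log p_2\cdots\log p_k$ (no factor of $\log p_1$), while the monomial attached to $a_{i-1}$ for $i\ge 2$ contains $\log p_1$ and omits precisely the factor $\log p_i$. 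Hence the $k$ monomials appearing above are pairwise distinct. Algebraic independence from the previous step then forces each coefficient $a_0,a_1,\dots,a_{k-1}$ to vanish, contradicting the assumption.

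I do not anticipate a serious obstacle: once Schanuel is applied correctly to get algebraic independence, the rest is a routine bookkeeping argument about distinct monomials. The only point that requires care is checking that every term produced by clearing denominators has a genuinely distinct monomial, which is clear from the \emph{omitted-index} description above.
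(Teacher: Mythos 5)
Your proposal is correct and follows essentially the same route as the paper: apply Schanuel with $x_i=\log p_i$ (the multiplicative independence hypothesis giving the required $\mathbb{Q}$-linear independence of the logarithms) to obtain algebraic independence of $\log p_1,\dots,\log p_k$, then clear denominators and observe that a rational dependence among $1,\log p_1/\log p_2,\dots,\log p_1/\log p_k$ would produce a nontrivial linear relation among distinct degree-$(k-1)$ monomials in the $\log p_i$, a contradiction. If anything, your write-up is slightly cleaner, since you verify Schanuel's hypothesis directly from multiplicative independence and check explicitly that the monomials arising after clearing denominators are pairwise distinct.
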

	\begin{rem}
		For $k=2$, the conclusion of Lemma \ref{Sch} holds without requiring Schanuel's conjecture.
	\end{rem}
	\begin{proof}
		The required $\mathbb{Q}$-linearly independence follows if
		\[
		\Lambda'=\left(\frac{\prod_{i=1}^{k} \log p_i}{\log p_1},\dots, \frac{\prod_{i=1}^{k} \log p_i}{\log p_k}\right)
		\]
		are $\mathbb{Q}$-linearly independent.
		Considering Conjecture \ref{Schanuel} in the case when $e^{x_1},\dots,e^{x_k}$ are integers, the conjecture reduces to saying that $x_1,\dots,x_k$ are algebraically independent over $\mathbb{Q}.$ We want to apply this conclusion with $x_1=\log p_1,\dots,x_k=\log p_k.$ Now if 
		 if $1,\log p_1,\dots,\log p_k$ are $\mathbb{Q}$-linearly independent, then we meet the conditions of Conjecture \ref{Schanuel} and can apply the aforementioned conclusion. This says that $\log p_1,\dots,\log p_k$ are algebraically independent. Suppose that $\Lambda'$ is not $\mathbb{Q}$-linear independent, then we have
		 \[
		 \sum_{j=1}^k c_j \log p_j=c\prod_{i=1}^k \log p_i
		 \] 
		for some integers $c_1,\dots,c_k$ and $c.$ This contradicts the algebraic independence of $\log p_1,$ $\dots,$ $\log p_k.$ This implies that $\Lambda'$ is indeed $\mathbb{Q}$-linearly independent if $1,\log p_1,\dots,\log p_k$ are $\mathbb{Q}$-linearly independent.
	\end{proof}
	To prove Theorem \ref{three choices}, first recall \cite[Theorem 1, Chapter 2]{L66}. 
	\begin{thm}[Six Exponentials Theorem]\label{Six}
		Let $(x_1,x_2,x_3)$ and $(y_1,y_2)$ be a $\mathbb{Q}$-linearly independent triple and pair of complex numbers, respectively. There exists a pair $(i,j)\in \{1,2,3\}\times \{1,2\}$ such that
		\[
		e^{x_i y_j}
		\]
		is transcendental over $\mathbb{Q}.$
	\end{thm}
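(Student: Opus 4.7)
The plan is to follow the classical Schneider--Lang auxiliary-function proof, arguing by contradiction. Suppose all six numbers $e^{x_i y_j}$ are algebraic, and fix a number field $K$ of degree $d$ containing them. For integer parameters $N, M$ chosen so that $N^2 \ge 2 d M^3$, I would introduce the exponential polynomial
\[
F(z) = \sum_{\mu=0}^{N-1} \sum_{\nu=0}^{N-1} p(\mu,\nu)\, e^{(\mu y_1 + \nu y_2) z},
\]
with coefficients $p(\mu,\nu) \in \mathbb{Z}$ to be determined. At each lattice point $m_1 x_1 + m_2 x_2 + m_3 x_3$ with $0 \le m_i < M$, the value $F\bigl(\sum m_i x_i\bigr)$ is a $\mathbb{Z}$-linear combination of monomials in the six algebraic quantities $e^{x_i y_j}$, so the requirement that $F$ vanish on the entire $M \times M \times M$ lattice amounts to $M^3$ linear equations over $K$ in the $N^2$ unknowns $p(\mu,\nu)$. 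A standard application of Siegel's lemma then produces a nonzero integer solution with $\log \max_{\mu,\nu} |p(\mu,\nu)| = O(N)$.

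Next comes the extrapolation and contradiction. The $\mathbb{Q}$-linear independence hypotheses enter crucially here: since $y_1, y_2$ are $\mathbb{Q}$-linearly independent the complex frequencies $\mu y_1 + \nu y_2$ are pairwise distinct, so a Vandermonde argument shows that $F$ cannot vanish at every point of the full three-dimensional lattice $\{\sum m_i x_i : m_i \in \mathbb{Z}_{\ge 0}\}$ -- were it to do so, restricting along a suitable line would force all $p(\mu,\nu) = 0$. Let $w = \sum m_i x_i$ be a lattice point of minimal $M' := \max_i m_i$ at which $F(w) \neq 0$, so $M' \ge M$. The value $F(w)$ lies in $K$, is nonzero, and has house and denominator controllable by $C_1^{N M'}$; Liouville's inequality therefore gives
\[
|F(w)| \ge C_1^{-d N M'}.
\]
On the other hand, $F$ is entire of order one, and the Schwarz--Jensen lemma applied on a disk of radius $R = (M')^{1+\delta}$ around the origin, exploiting the $\Omega(M^3)$ zeros already present in the small box, yields an upper bound of the shape
\[
|F(w)| \le C_2^{N R}\, \bigl(M'/R\bigr)^{M^3}.
\]
Balancing these and using $N^2 \asymp d M^3$ produces the desired contradiction once $M$ is sufficiently large.

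The main obstacle, as always in transcendence proofs, is the careful bookkeeping of heights and the calibration of the three parameters $N$, $M$, $R$: the analytic decay $(M'/R)^{M^3}$ must dominate the arithmetic lower bound $C_1^{-d N M'}$. This is precisely why the ratio $3 : 2$ between the sizes of the two $\mathbb{Q}$-linearly independent tuples is the borderline case -- the cubic exponent coming from the three-dimensional lattice of imposed zeros just beats the quadratic exponent coming from the two-dimensional space of coefficients, whereas the analogous ``four exponentials'' $2 \times 2$ problem remains famously open because the quadratic-versus-quadratic balance fails. A secondary but delicate point is verifying the extrapolation claim that $F$ cannot vanish on the entire infinite three-dimensional lattice, which is where the $\mathbb{Q}$-linear independence of $x_1, x_2, x_3$ gets used in full.
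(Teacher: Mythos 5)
The paper does not prove this statement: it is recalled as a classical result with a citation to Lang's \emph{Introduction to Transcendental Numbers} (Theorem 1 of Chapter 2), so there is no in-paper proof to compare against. Your sketch follows the standard Schneider--Lang auxiliary-function argument from exactly that tradition, and the architecture (Siegel's lemma on an $M^3\times N^2$ system, Liouville lower bound, Schwarz--Jensen upper bound, and the $3:2$ exponent count explaining why four exponentials is out of reach) is the right one.

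Two steps as written would not survive being made precise. First, your justification that $F$ cannot vanish on the entire lattice is flawed: restricting to a line $\{m x_1\}$ gives $F(mx_1)=\sum p(\mu,\nu)\,\alpha_{\mu\nu}^m$ with $\alpha_{\mu\nu}=e^{(\mu y_1+\nu y_2)x_1}$, and the Vandermonde argument needs the $\alpha_{\mu\nu}$ to be pairwise distinct. Distinctness of the frequencies $\mu y_1+\nu y_2$ does not give this, since $e^{ax_1}=e^{bx_1}$ whenever $(a-b)x_1\in 2\pi i\mathbb{Z}$, and no line through the lattice obviously avoids such coincidences. The standard repair is zero-counting: by the $\mathbb{Q}$-linear independence of $x_1,x_2,x_3$ the points $\sum m_ix_i$ with $0\le m_i<T$ are $T^3$ distinct points in a disk of radius $O(T)$, whereas a nonzero exponential polynomial with $N^2$ terms and frequencies of modulus $O(N)$ has only $O(N^2+NR)$ zeros in a disk of radius $R$; for $T$ large this is a contradiction, so $F$ survives somewhere on the lattice. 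Second, your Schwarz bound $(M'/R)^{M^3}$ uses only the $M^3$ zeros originally imposed, and with $R=(M')^{1+\delta}$ the factor $C_2^{NR}$ alone already swamps the Liouville bound $C_1^{-dNM'}$ once $M'$ is much larger than $M$ (try $M'=M^{10}$ and compare exponents: $NR\asymp M^{11.5+10\delta}$ against $\delta M^3\log M'$). You must invoke the minimality of $M'$ to get all $(M')^3$ lattice points with $\max_i m_i<M'$ as zeros and take $R\asymp (M')^{3/2}$; then $(M')^3\log M'$ dominates both $NR$ and $dNM'$ and the contradiction closes. Both repairs are standard, but as stated the extrapolation step and the parameter calibration each have a genuine hole.
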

\begin{proof}[Proof of Theorem \ref{three choices}]
	Applying Theorem \ref{Six} with \[(x_1,x_2,x_3)=(\log 7/\log 5, \log 11/\log 5,\log 13/\log 5)\] and \[(y_1,y_2)=(\log 3,\log 5),\] we see that at least one of 
	\[
	\exp(\log 3\log 7/\log 5), \exp(\log 3\log 11/\log 5), \exp(\log 3\log 13/\log 5).\]
is not algebraic over $\mathbb{Q}.$ Suppose now that $a,b,c\geq 2$ are  integers, $\log b/\log a\notin\mathbb{Q}$ and $\exp(\log a\log b/\log c)$ is not algebraic over $\mathbb{Q}.$ Then  integer solutions $(k_1,k_2,k_3)$ to the following equation
	\[
	k_1\log a\log b+k_2\log a\log c+k_3\log b\log c=0
	\]
	must have $k_1=0$,  for otherwise
	\[
	\frac{\log a\log b}{\log c}+\frac{k_2}{k_1}\log a+\frac{k_3}{k_1}\log b=0
	\]
	and so $\exp(\log a\log b/\log c)=a^{k_2/k_1}b^{k_3/k_1}$ which is algebraic. However, if $k_1=0,$ then $k_2=k_3=0$ or else $\log b/\log a\in\mathbb{Q}.$ Hence
	\[
	1,\frac{\log a}{\log c}, \frac{\log a}{\log b}
	\]
	are $\mathbb{Q}$-linearly independent. Therefore, Theorem \ref{Six} implies that at least one of the triples 
	\[
	(3,5,7), (3,5,11),(3,5,13),
	\]
	say $(a,b,c)$, is such that
	\[
	1,\frac{\log a}{\log b},\frac{\log a}{\log c}
	\]
	is $\mathbb{Q}$-linearly independent. This proves Theorem \ref{three choices}.\end{proof}
	\section{Digit-special numbers}
	
	It is natural to begin with the more general case of digit-special numbers, and then specialise to the settings of Theorem \ref{A146025} and Theorem \ref{A230360}. As such, in this section we present the proof of Theorem \ref{MAIN}, beginning with two lemmas that develop the majority of the new machinery required. In what follows, we say that $p_1,\dots,p_k$ are strongly multiplicatively independent if $$1,\log p_1/\log p_2,\dots,\log p_1/\log p_k$$ are linearly independent over the field of rational numbers. From Lemma \ref{Sch} and assuming Schanuel's conjecture, this is the case when
	\[
	1,\log p_1,\dots,\log p_k
	\]
	are $\mathbb{Q}$-linearly independent. For $k=2,$ the condition is simply saying that $\log p_1/\log p_2\notin\mathbb{Q}.$
	\begin{lemma}\label{Thfraction}
		Let $k\geq 2$ be an integer and $p_1,\dots, p_k$ be strongly multiplicatively independent integers and for each $i\in\{1,\dots,k\}$ let $a_i\in\{0,\dots,p_i-1\}$. If $$\sum_{i=1}^k \frac{\log (p_i-1)}{\log p_i} <k-1,$$ then the set of numbers in $[0,1]$ whose $p_i$-ary expansion does not contain the digit $a_i$ for all $i\in\{1,\dots,k\}$ has Hausdorff dimension zero.
	\end{lemma}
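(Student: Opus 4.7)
The plan is to realise the lemma as a direct application of the uniform intersection theorem recalled in Section \ref{UniSmall}. For each $i \in \{1,\dots,k\}$, I would work with
$$E_i = \{x \in [0,1] : \text{the base-}p_i \text{ expansion of } x \text{ does not contain the digit } a_i\},$$
understood, when necessary, as the attractor of the iterated function system $\{x \mapsto (x+j)/p_i : j \in \{0,\dots,p_i-1\}\setminus\{a_i\}\}$, so that $E_i$ is compact. Shifting a base-$p_i$ expansion by one digit corresponds to the map $x \mapsto \{p_i x\}$, and consequently $E_i$ is a closed $\times p_i \bmod 1$ invariant subset of $[0,1]$. A standard Moran-type covering argument — at level $n$, $E_i$ is covered by $(p_i-1)^n$ base-$p_i$ cylinders of length $p_i^{-n}$ and contains equally many essentially disjoint such cylinders — gives $\Haus E_i = \ubox E_i = \log(p_i-1)/\log p_i$.

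I would then check the hypotheses of the intersection theorem. The assumed inequality is exactly
$$\sum_{i=1}^k \Haus E_i = \sum_{i=1}^k \frac{\log(p_i-1)}{\log p_i} < k-1,$$
and strong multiplicative independence of $p_1,\dots,p_k$ is by definition the $\mathbb{Q}$-linear independence of $1, \log p_1/\log p_2, \dots, \log p_1/\log p_k$; in particular each ratio $\log p_1/\log p_i$ is irrational and these ratios are $\mathbb{Q}$-linearly independent, which is the assumption imposed on $(p_1,\dots,p_k)$ in Section \ref{UniSmall}.

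Applying the result of Section \ref{UniSmall} with $A_{p_i} := E_i$ and the trivial affine data $u_i = 1$, $v_i = 0$ for every $i$, I obtain
$$\ubox \bigcap_{i=1}^k E_i = 0.$$
Since Hausdorff dimension is dominated by upper box dimension, the Hausdorff dimension of $\bigcap_{i=1}^k E_i$ — which is precisely the set described in the lemma — is zero.

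The proof carries no substantive obstacle once the intersection theorem of Section \ref{UniSmall} is in hand; all the analytic depth is concentrated in that theorem. The remaining verifications, namely that each $E_i$ is $\times p_i \bmod 1$ invariant and has Hausdorff dimension $\log(p_i-1)/\log p_i$, are classical and require no additional ideas.
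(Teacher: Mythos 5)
Your proposal is correct and follows essentially the same route as the paper: replace each digit-avoiding set by a compact $\times p_i \bmod 1$ invariant set of upper box dimension $\log(p_i-1)/\log p_i$ (the paper uses the closure, you use the IFS attractor, which amounts to the same thing), verify that strong multiplicative independence and the dimension inequality meet the hypotheses of the intersection result in Section \ref{UniSmall}, and conclude that the intersection has upper box, hence Hausdorff, dimension zero.
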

	
	\subsection{Proof of Lemma \ref{Thfraction}}
	Let $p \in \N$ and $a\in\{0,\dots,p-1\}.$ Define
	\[
	A_p(a)=\overline{\{x\in [0,1]: \text{the $p$-ary expansion of $n$ does not contain the digit $a$}\}},
	\]
	adopting the convention that whenever possible a number $x$ should be written with a terminating digit expansion. A simple calculation shows $\ubox A_p(a)=\log (p-1)/\log p$, (see \cite[Section 1.3]{BP}, \cite[Chapter 4]{Fa} for further details). Hence, if $\sum_{i=1}^k \log (p_i-1)/\log p_i<k-1$, then 
	\[
	\Haus A_{p_1}(a_1)\cap\dots \cap A_{p_k}(a_k)=\boxd A_{p_1}(a_1)\cap\dots \cap A_{p_k}(a_k)=0
	\]
	by Section \ref{UniSmall}. \hfill $\square$
	
	\begin{lemma}\label{Thinteger}
		Let $k\geq 2$ and $p_1,\dots, p_k$ be strongly multiplicatively independent numbers and for each $i\in\{1,\dots,k\},$ let $a_i\in\{0,\dots,p_i-1\}.$ If $$\sum_{i=1}^k \log (p_i-1)/\log p_i<k-1,$$ then for each $\epsilon>0$ there exists a constant $C>0$ such that for each $N\geq 1$,
		$$
		\#\{m\in \{0,\dots, N\} : \textnormal{the $p_i$-ary expansion of $m$ does not contain $a_i$ for all $i=1,\dots,k$}\} \leq CN^\epsilon.
		$$
		
		Moreover, for $$A(a_1,\dots, a_k) = \{m \in \N : \textnormal{the $p_i$-ary expansion of $m$ does not contain $a_i$ for all $i=1,\dots,k$}\},$$ we have
		\[
		\overline{d}\left(\{n\in\mathbb{N}:  A(a_1,\dots,a_k)\cap [p_1^{n},p_1^{n+1}]\neq\emptyset         \}\right) = 0.
		\]
	\end{lemma}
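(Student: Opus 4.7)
The plan is to reduce the integer-counting problems to covering estimates for intersections of the Cantor-type sets $A_{p_i}(a_i) \subset [0,1]$ introduced in the proof of Lemma \ref{Thfraction}, and then to invoke the uniform quantitative covering bound recalled in Section \ref{UniSmall}. The basic identity is that dividing an integer $m$ by an appropriate power of $p_i$ turns the digit-avoidance condition in base $p_i$ into the condition that the resulting point of $[0,1]$ lies in $A_{p_i}(a_i)$, and the various required normalising scalings, though depending on the scale, will stay in a fixed compact subset of $(0,\infty)$, which is exactly what enables a uniform covering bound.

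The heart of the argument is a single unifying estimate: for every $\epsilon > 0$ there is $C'_\epsilon > 0$ such that for all integers $L \geq 1$,
\[
\#\bigl\{m \in [0,p_1^L] \cap \mathbb{N} : \text{the $p_i$-adic expansion of $m$ avoids $a_i$ for all } i = 1,\dots,k \bigr\} \leq C'_\epsilon L^\epsilon.
\]
For such an $m$, set $y_m = m/p_1^L \in [0,1]$; its base-$p_1$ digits are just those of $m$ padded with zeros, so $y_m \in A_{p_1}(a_1)$. For each $i \in \{2,\dots,k\}$ let $n_i$ be the unique integer with $p_i^{n_i} \leq p_1^L < p_i^{n_i+1}$. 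Then $m/p_i^{n_i+1} \in [0,1)$ has base-$p_i$ digits equal to those of $m$ (again padded with zeros), so $m/p_i^{n_i+1} \in A_{p_i}(a_i)$. Setting $s_i = p_i^{n_i+1}/p_1^L \in (1, p_i]$, this reads $y_m \in s_i A_{p_i}(a_i)$, whence
\[
y_m \in A_{p_1}(a_1) \cap \bigcap_{i=2}^k s_i A_{p_i}(a_i).
\]
Strong multiplicative independence of the $p_i$ is precisely the rational-linear independence required in Section \ref{UniSmall}, and the identity $\Haus A_{p_i}(a_i) = \log(p_i-1)/\log p_i$ (from Lemma \ref{Thfraction} and Section \ref{INV}) together with the hypothesis $\sum_i \log(p_i-1)/\log p_i < k-1$ supplies the needed dimension bound. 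Taking $M = \lceil L \log p_1/\log 2 \rceil$, the uniform covering estimate of Section \ref{UniSmall} produces a cover of the above intersection by at most $M^\epsilon \leq C L^\epsilon$ dyadic intervals of length $2^{-M} \leq p_1^{-L}$. Since the $y_m$ are distinct integer multiples of $p_1^{-L}$, each such interval contains at most one of them, which yields the claim.

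From this single estimate, both assertions of the lemma follow almost immediately. For the first, given $N$ choose $L$ with $p_1^{L-1} \leq N < p_1^L$ and apply the claim to bound the count by $C L^\epsilon \leq C'_\epsilon (\log N)^\epsilon \leq C''_\epsilon N^\epsilon$. For the density statement, let $G = \{n \in \mathbb{N} : A(a_1,\dots,a_k) \cap [p_1^n, p_1^{n+1}] \neq \emptyset\}$ and, for each $n \in G$, pick a witness $m_n$; these are distinct integers in $[0, p_1^{N+1}]$ whenever $n \leq N$, so the same claim gives
\[
|G \cap [1,N]| \leq C'_\epsilon (N+1)^\epsilon = o(N),
\]
which is exactly the density zero statement.

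The only real point to verify is that the $s_i$ land in a fixed compact subset of $(0,\infty)$ uniformly in $L$, so that the constant $N_\epsilon$ in the uniform covering bound of Section \ref{UniSmall} can be chosen independently of $L$; here this is automatic from $s_i \in (1, p_i]$. Aside from this bookkeeping, the proof is just an application of the theorem of \cite{Y19} to an easily identified intersection, so the actual analytic content sits entirely in the machinery already imported from Section \ref{UniSmall}; the main obstacle, such as it is, is simply keeping track of which power of which prime normalises each factor.
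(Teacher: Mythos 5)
Your argument for the first assertion is essentially the paper's own argument, just phrased on the real line (intersections of boundedly scaled copies $s_iA_{p_i}(a_i)$, which is literally the form $\cap_i(u_iA_{p_i}+v_i)$ quoted in Section \ref{UniSmall}) rather than via the diagonal line in $\mathbb{R}^k$ meeting the product $B_{p_1}\times\cdots\times B_{p_k}$ as in the paper. However, you have read the covering estimate of Section \ref{UniSmall} too literally: the bound that the machinery of \cite{Y19}/\cite{Sh} actually provides, and the one the paper itself uses in this proof (where it appears as $p_1^{\epsilon m}$), is $N(\cap_i(u_iA_{p_i}+v_i),2^{-M})\leq 2^{\epsilon M}$ for $M\geq N_\epsilon$, i.e.\ covering number at scale $r$ at most $r^{-\epsilon}$; uniform box dimension zero cannot give a polylogarithmic bound of the form $M^\epsilon$, so the displayed inequality in Section \ref{UniSmall} must be read in the exponential form. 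With that correction your count of admissible $m\leq p_1^L$ becomes $\leq C\,2^{\epsilon M}\approx C\,p_1^{\epsilon L}\leq C'N^\epsilon$, which still proves the first claim exactly as in the paper. (A shared, patchable technicality: when $a_i=0$ the points $y_m=m/p_1^L$ need not lie in the closed sets $A_{p_i}(a_i)$ themselves, only within $O(p_1^{-L})$ of them, because of the terminating-expansion convention; the paper glosses over the same point.)

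The genuine gap is the ``Moreover'' statement. Your deduction of density zero uses nothing but the counting claim, and it only works because you took the spurious polylogarithmic bound at face value: with the correct bound, the number of possible witnesses below $p_1^{N+1}$ is of order $p_1^{\epsilon N}$, which is exponential in $N$ and gives no information about $\#\{n\leq N: A(a_1,\dots,a_k)\cap[p_1^n,p_1^{n+1}]\neq\emptyset\}$ beyond the trivial bound $N$. Indeed the density-zero assertion does not follow from the $N^\epsilon$ count at all, and the paper proves it by a different mechanism: if witnesses existed for a set $\mathcal{M}$ of scales of positive upper density, then by equidistribution (Section \ref{Eqularge}, using strong multiplicative independence of the $p_i$) the closure of the attained direction vectors $(1,p_2^{\{m\log p_1/\log p_2\}},\dots,p_k^{\{m\log p_1/\log p_k\}})$, $m\in\mathcal{M}$, has positive Lebesgue measure, and the dipole-direction inequality of Section \ref{DD} then forces $\ubox\,(B_{p_1}\times\cdots\times B_{p_k})\geq k-1$, contradicting $\sum_i\log(p_i-1)/\log p_i<k-1$. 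Some argument of this kind is needed; as written, the second half of the lemma is unproved in your proposal.
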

	
	\subsection{Proof of Lemma \ref{Thinteger}}
	Let $p \in \N$ and for each $a\in\{0,\dots,,p-1\}$ define
	\[
	A_p(a)=\{n\in\mathbb{N}: \text{the $p$-ary expansion of $n$ does not contain the digit $a$}\}.
	\]
	Let $k \in \N$, $p_1<\dots<p_k$ be strongly multiplicatively independent integers and $(a_1,\dots,a_k)$ be an arbitrary $k$-tuple with $a_i\in\{0,\dots,p_i-1\}$ for each $i\in\{1,\dots,k\}$. For brevity, we assume $a_1=a_2=\dots=a_k=0$ and note that all the other cases can be treated similarly. Thus, henceforth we write $A_{p_i}$ for $A_{p_i}(0)$. Define
	\[
	K=A_{p_1}\times\dots\times A_{p_k}\subset\mathbb{N}^k.
	\]
	We are interested in the intersection $l_K=K\cap l$, where $l$ is the diagonal line \[l=\{(n,\dots,n): n\in\mathbb{N} \}.\] First, for each $(n,\dots,n)\in l_K$ we wish to find a suitable way to renormalize it.  To do this, we define the vector (in what follows, $\{.\}$ is the fractional part function),
	\[
	\textbf{a}_n=n/p_1^m  \left(1, p_2^{\{m\log p_1/\log p_2\}},\dots,p_k^{\{m\log p_1/\log p_k\}}\right).
	\]
	By construction, $\textbf{a}_n$ is contained in
	\[
	A_{p_1}/p_1^m\times A_{p_2}/p_2^{m_2}\times \dots A_{p_k}/p_k^{m_k}\subset [1,p_1]\times [1,p_1p_2]\times\dots\times [1,p_1p_k]
	\]
	for suitable integers $m_2,m_3,\dots, m_k.$ For each $i\in\{1,\dots,k\}$, we see that
	\[
	A_{p_i}/p_i^{m_i}\subset \overline{\{x\in [1,p_1p_i]: \text{The $p_i$-ary expansion of $x$ does not have digit $0$}               \}}:=B_{p_i}.
	\]
	Observe that $B_{p_i}$ is a subset of a scaled version of a closed $\times p_i\mod 1$ invariant set with Hausdorff dimension $\log (p_i-1)/\log p_i$. Indeed for each $i,$, we first consider the following set
	\[
	B'_{p_i}=\overline{\{x\in [0,1]: \text{The $p_i$-ary expansion of $x$ does not have digit $0$}\}}.
	\]
To see how $B'_{p_i}$ is $\times p_i \mod  1$ invariant, consider the following construction, which closely mirrors the construction of the middle-third Cantor set. We start with the unit interval $[0,1]$, then decompose it equally into $p$ pieces, each with length $1/p.$ We now cut out the first interval, $[0,1/p).$ Then, inside each interval $[j/p,(j+1)/p),j\in\{1,2,\dots,p-1\}$ we cut out the first $1/p$ portion, that is, $[j/p,j/p+1/p^2).$  In this way, we obtain a decreasing sequence of compact sets which converge to $B'_{p_i}.$ Clearly, this set $B'_{p_i}$ is closed and $\times p_i\mod 1$ invariant. After constructing the set $B'_{p_i},$ we consider the scaled set $p^{k_i}_iB'_{p_i},$ where $k_i$ is the smallest integer with $p_i^{k_i}>p_1p_i.$ For each integer $j\in\{1,\dots,p^{k_i}_i\},$ the set $\overline{p^{k_i}_iB'_{p_i}\cap (j,j+1)}$ is empty, or else it is the translated set $B'_{p_i}+j.$ \\
	
As $n$ varies in $[p^m_1, p^{m+1}_1)$, the vectors $\textbf{a}_n$ are contained in a line through the origin with direction vector \[(1, p_2^{\{m\log p_1/\log p_2\}},\dots,p_k^{\{m\log p_1/\log p_k\}}).\tag{Direction}\] Denoting this line as $l_m$, we see that all values of $\textbf{a}_n$ (if they exist) must be contained in 
	\[
	l_m\cap (B_{p_1}\times\dots\times B_{p_k}).
	\]
	Consider the intervals $[n,n+1]$ for $n\in \{p^{m}_1,\dots,p^{m+1}_1-1\}.$ Thus, any $\textbf{a}_n$ (if it exists) must have a first coordinate in the interval $[n/p^m_1,(n+1)/p^m_1].$ We decompose $l_m$ into closed line segments of equal length and disjoint interiors according to the first coordinate, i.e. the components have a first coordinate of form $[j/p^m_1,(j+1)/p^m_1]$ for integers $j.$ We denote this collection of line segments $\mathcal{I}_m$, and wish to estimate the length of those line segments. We know the length of the projection of the first coordinate, say, $d>0.$ We also know the direction vector of the line $l_m$, say $\mathbf{t}=(t_1,t_2,\dots,t_k).$ Then, the length of the line segments will be equal to
	\[
	\frac{\sqrt{t^2_1+\dots+t^2_k}}{|t_1|} d.
	\] 
	Together with (Direction), we see that the length we want to compute is in the range $$\left[p^{-m}_1,p_1^{-m}\sqrt{1+p^2_2+\dots p^2_k}\right].$$ By Section \ref{UniSmall}, we see that for each $\epsilon>0$, there is an integer $N_\epsilon>0$ such that for each $m\geq N_\epsilon$, the number of elements in $\mathcal{I}_m$ intersecting $B_{p_1}\times\dots\times B_{p_k}$ is smaller than
	$
	p^{\epsilon m}_1.
	$ Therefore, for $m\geq N_\epsilon$, the number of points $(n,\dots,n)$ on $l_K$ with $n\in [p^{m}_1,p^{m+1}_1)$ is at most $p_1^{\epsilon m}.$ Thus, there is a constant $C>0$ such that for all $N\geq 1,$
	\[
	\#A_{p_1}(0)\cap \dots\cap A_{p_k}(0)\cap [1,N]\leq C N^\epsilon.
	\]
	This concludes the first part. For the second, we utilise Section \ref{Eqularge}.\\
	
	Suppose that $\textbf{a}_n$ exists for some $n\in [p^m_1,p^{m+1}_1)$ for all $m \in \mathcal{M} \subseteq \N$, where $\overline{d}(\mathcal{M}) > 0$. This implies that
	\[
	\overline{\{(1, p_2^{\{m\log p_1/\log p_2\}},\dots,p_k^{\{m\log p_1/\log p_k\}})\}_{m\in\mathcal{M}}}
	\]
	has positive Lebesgue measure, forcing
	\[
	B_{p_1}\times\dots\times B_{p_k}
	\]
	to have dimension at least $k-1$, by Section \ref{DD}. This is a contradiction and concludes the proof of the second part. \hfill $\square$
	
	\subsection{Proof of Theorem \ref{MAIN}}
	Let $p_1,p_2,\dots=3,4,5,\dots$ be the list of prime numbers greater than two together with $4$. Under Schanuel's conjecture and Lemma \ref{Sch}, we see that $p_1,\dots$ are strongly multiplicatively independent. Observe that for each $i\geq 1,$
	\[
	\frac{\log (p_i-1)}{\log p_i}=1+\frac{1}{\log p_i} \log (1-p_i^{-1})\leq 1-\frac{1}{p_i\log p_i}.
	\]
	It may be easily numerically computed that the following convergent sum
	\[
	C=\sum_{i\geq 1} \frac{1}{p_i \log p_i} \approx 1.09561.
	\]
	and, in fact
	\[
	C>\sum_{i=1}^{26} \frac{1}{p_i \log p_i}\geq 1.00112>1.
	\]
	Note that $p_{26}=101$, the $26$-th prime number. Next, we apply Lemma \ref{Thinteger} with $k=26$ and a collection $a_1,\dots,a_k$ chosen arbitrarily. Fix a small number $\epsilon>0$, for each such collection of $a_1, \dots, a_k$, there is a constant $C_{a_1,\dots,a_k}$ such that among the first $N$ integers, all but at most $C_{a_1,\dots,a_k} N^\epsilon$ many of them contain $a_i$ in their $p_i$-ary expansion for at least one $i\in\{1,\dots,k\}.$ There are finitely many choices of the tuple $a_1,\dots, a_k$, and thus setting
	$$
	C :=\sum_{a_1,\dots,a_k} C_{a_1,\dots,a_k}<\infty
	$$
	completes the first part of the proof. For numbers in $(0,1)$ we may argue similarly and apply Lemma \ref{Thfraction}. \hfill $\square$
	\section{Numbers with only binary digits in different bases}
	In this Section we prove Theorem \ref{A146025} and Theorem \ref{A230360}.
	\subsection{Proof of Theorem \ref{A146025}}
	We utilise the general strategy found in the proof of Lemma  \ref{Thinteger}. Note that, for a base $b$, the set $A_b$ of numbers in $[0,1]$ whose $b$-ary expansion contain only the digits $0,1$ has Hausdorff dimension $\log 2/\log b$. Thus, Theorem \ref{A146025} follows by a direct modification of the proof of Lemma \ref{Thinteger} (with $k=2$ in the statement) together with the fact that
	\[
	\frac{\log 2}{\log 4}+\frac{\log 2}{\log 5}\approx 0.930677<1.
	\]
	\hfill $\square$
	\subsection{Proof of Theorem \ref{A230360}}
	First, observe
	\[
	\frac{\log 2}{\log 3}+\frac{\log 2}{\log 4}>1=2-1.
	\]
	Hence, we may not proceed as before by utilising the method of Lemma \ref{Thinteger}.\\
	
By a result in \cite{Wu} and \cite{Sh},  for any line $l$ not parallel with the coordinate axes, the intersection $l\cap A_4\times A_3$ has dimension at most $\log 2/\log 3+\log 2/\log 4-1$. This is the reason for the exponent that appeared in Theorem \ref{A230360}. By \cite[Lemma 11.1]{Y19}, there is an integer $M>0$ and closed $\times 3^M\mod 1$ invariant sets $A'_3, A''_3\subset [0,1]$ with $$A_3\subset A'_3+A''_3,$$ $$|\Haus A'_3-0.49|<0.0001,$$ and $$\Haus A'_3+\Haus A''_3\leq \log 2/\log 3+0.0001.$$ By applying the argument in the proof of Lemma \ref{Thinteger}, for each integer $n\geq 1$, we may map $\{x=y\}\cap \{x\in [4^{n},4^{(n+1)}]\}$ to a line passing through the origin with slope $9^{{\{n\log 4/\log 9\}}}$. Denote this line $l_n.$ We wish to estimate how large $l_n\cap A_4\times A_3\cap \{x\in [1,9]\}$ can be. Considering $l_n\cap A_4\times (A'_3+A''_3)$, we note this can be written as
	\[
	\bigcup_{t\in A''_3} ((l_n-(0,t))\cap A_4\times A'_3)+t.
	\]
	In general, $(l_n-(0,t))\cap A_4\times A'_3$ is small for each individual $t$, however, since there are uncountably many elements in $A''_3$, we cannot say anything about the union. However, in our discrete case we may bypass this issue.\\
	
	Our aim is to decompose $A_3$ in such a way that we can utilise the uniform small dimension result discussed in Section \ref{UniSmall}. Let $m,r$ be two integers with $m\geq 2$ and $r\in\{0,\dots,m-1\}$. Denote $A_3(m,r)$ to be the subset of $A_3$ consisting those numbers whose ternary expansion may only have the digit $1$ in the $(km+r)$-th positions for all integers $k\geq 0$. We then observe
	\[
	A_3=\sum_{r=0}^{m-1} A_3(m,r)
	\]
	as a sumset. Choosing an integer $s<m-1$ such that
	\[
	A_3'=\sum_{r=0}^{s} A_3(m,r)
	\]
	and 
	\[
	A_3''=\sum_{r=s+1}^{m-1}A_3(m,r)
	\]
	yields $A_3=A_3'+A_3''.$ By choosing $m$ to be suitably large as well as $s$ we may force
	\[
	0.49<\Haus A_3'<0.5.
	\]
	Hence $\Haus A_3'+\Haus A_4<1$ and $A_3'$ is $\times 3^m \mod 1$ invariant. \\
	
	Recall that we wish to investigate $l_n\cap 4A_4\times 27A_3$. In particular, we wish to count the number of points in $l_n\cap 4A_4\times 27A_3$ whose $x$ coordinate is of form $k/4^n, k\in\{4^n,\dots,4^{n+1}-1\}.$ If $a\in l_n\cap 4A_4\times 27A_3$, then there is a $t\in 27A''_3$ such that $(a-(0,t))\in 4A_4\times 27A_3'$. It is easy to check that we only need to consider those $t$ with a terminating ternary expansion of at most $[(n+1)\log 4/\log 3]$ many digits in total.\\
	
	Let $\epsilon>0$ be a small number. There exists an integer $N_\epsilon$ such that we may apply Section \ref{UniSmall}. Assume that $n\geq N_\epsilon$. For each $t$ as above, we see that $(l_n-(0,t))\cap 4A_4\times 27A_3$ can be covered by at most $4^{n\epsilon}$ balls of radius $4^{-n}$. Moreover, there is a constant $C$ (depending only on $m$) such that there are no more than $C4^{n(\log2/\log 3-0.49)}$ many such $t$ to be considered. Hence, $l_n\cap 4A_4\times 27A_3$ can be covered by at most $C4^{n(\log2/\log 3-0.49+\epsilon)}$ many balls of radius $4^{-n}.$ This implies that among all integers in $[4^n,4^{n+1}),$ there are no more than $C4^{n(\log 2/\log 3-0.49+\epsilon)}$ many of them with base $3$ and $4$ expansions containing only binary digits. We can replace $0.49$ to be any number smaller than $0.5$ (by choosing $m$ to be large enough), concluding the proof of first part of Theorem \ref{A230360}.\\

	\begin{figure}[h]
		\includegraphics[width=0.75\linewidth, height=8cm]{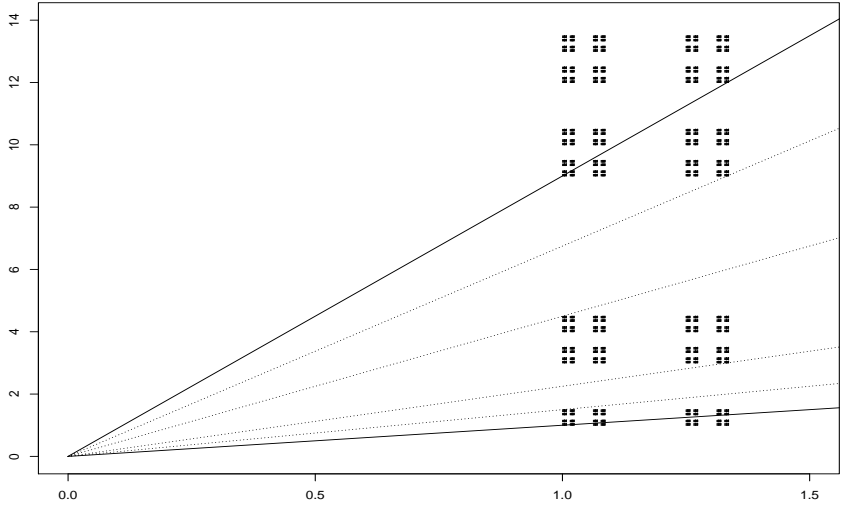} 
		\caption{The two solid lines have slopes $1$ and $9$. The four dashed lines have slopes $1.5,2.25,4.5$ and $6.25.$}
		\label{fig:figure 2}
	\end{figure}
	
	For the second part, note that $l_n$ is a line passing through the origin with slope $9^{\{n\log 4/\log 9\}}$. As $n$ varies through the natural numbers, $9^{\{n\log 4/\log 9\}}$ will take values in $[1,9].$ Figure \ref{fig:figure 2} illustrates that there are regions of slopes such that the lines passing through the origin with those slopes cannot intersect \[(4A_4\cap [1,4])\times (27 A_3\cap [1,27]).\] For example, if \[9^{\{n\log 4/\log 9\}}\in (1.5,2.25)\cup (4.5,6.25),\] then \[l_n\cap (4A_4\cap [1,4])\times (27 A_3\cap [1,27])=\emptyset.\] Since the slopes equidistribute across $[1, 9]$, directly computing the proportion of such regions in $[1, 9]$ shows the above intersection is empty for at least a $0.36907$ portion of $\N$. \hfill $\square$

	\section{Discussion, Conjectures and Open Problems}\label{discuss}
	
	In this section we provide some broad insights into the above topics to help future work. Divided into three subsections, the first deals our numerical analysis on digit special numbers, the second with binary expansions in base $3$ and $4$, and the last with related open problems.
	
	\subsection{Digit-special numbers}
	We have computed the approximate number of digit special numbers  lower than $3^{36}$. In order to check whether $n \in \N$ is special, it suffices to check its base $g$ expansion for $3\leq g\leq b$ and $g$ either a prime number or $4$, where $b$ is the largest prime such that $b^b\leq n$. In the following, we denote this collection of bases $B_n$. If we choose a digit for each base $g$, then the amount of numbers whose base $g$ expansion misses the chosen digit in each base for all $g\in B_n$ is $O(n^{s+\epsilon})$, where 
	\[
	s=\max\left\{0,\sum_{g\in B_n} \frac{\log(g-1)}{\log g}-(\#B_n-1)\right\}.
	\]  
	To understand this choice of $s$ we direct the reader to the hypotheses of Lemma \ref{Thinteger} and the decomposition method found at the beginning of the proof of Theorem \ref{A230360}. As there are $\prod_{g\in B_n} g$ many choices of different possible combinations of missing digits,  a very rough estimate for the amount of special-numbers less than $n$ is

	\[
	\text{Est}(n)=n^{s}\prod_{g\in B_n} g.
	\]
	Letting
	\[
	\text{Real}(n)=\#\{i \leq n : \text{$i$ is digit special}\},
	\]
	in Figure \ref{fig:figure 4} we compare $\text{Est}(n)$ with the actual data, by plotting $G(n)=\log \text{Est}(n)/\log n$ and $R(n)=\log \text{Real}(n)/\log n$. Specifically, it is worth observing that the estimates appear to become quite precise for $n \geq 31$.
	\begin{figure}[h]
		\includegraphics[width=0.75\linewidth, height=7cm]{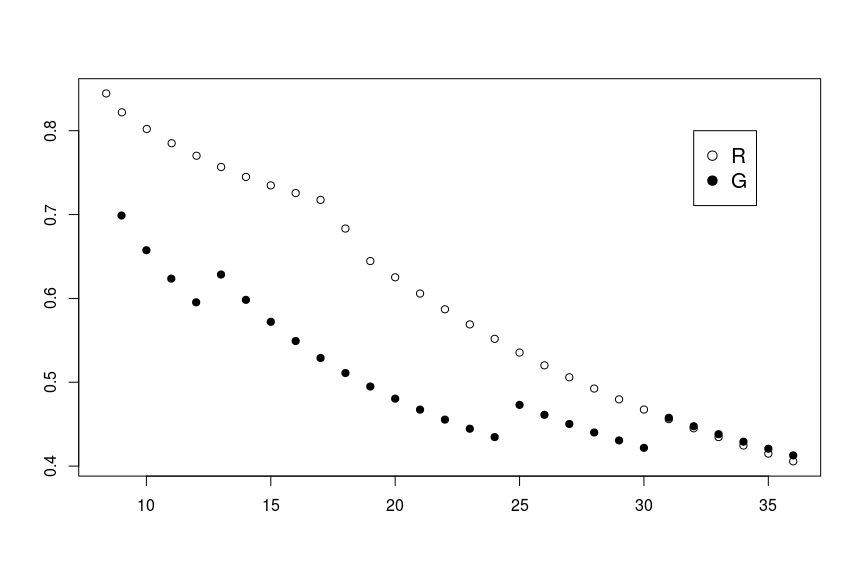} 
		\caption{This is a plot of $R(3^n) , G(3^n)$ for $8\leq n\leq 36$. Corollary \ref{MAIN} implies that the $R$ points will eventually drop near to $0$.}
		\label{fig:figure 4}
	\end{figure}
	
	Although these estimates are somewhat crude, they approximate the true values surprisingly well for large $n$. In order to establish the theoretical reasons for this, further quantitive information on the constant $C$ appearing in Lemma \ref{Thinteger} is required.\\
	
	The overarching message of our analysis is that digit-special numbers are exceptionally rare. Thus, we conclude this part of the discussion with the following two conjectures, which constitute a strengthening of Theorem \ref{MAIN}.
	
	\begin{conj}\label{WHY}
		There are finitely many digit-special integers.
	\end{conj}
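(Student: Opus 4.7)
The plan is to strengthen Theorem \ref{MAIN} from its $N^{\epsilon}$ bound to genuine finiteness by exploiting the fact that a digit-special integer $n$ must satisfy a missing-digit condition in every base $b \geq 3$ with $b \leq \log_2 n$. Under Schanuel's conjecture and Lemma \ref{Sch}, the bases $\{3,4,5,7,11,\ldots\}$ are strongly multiplicatively independent, so in principle the intersection machinery of Section \ref{UniSmall} applies to arbitrarily large tuples, not just the fixed choice $k=26$ used in the proof of Theorem \ref{MAIN}.

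The first step would be the natural upgrade of the proof of Lemma \ref{Thinteger} with a growing base set $B(n) = \{3,4\} \cup \{\text{primes } p : 5 \leq p \leq t(n)\}$ for some function $t(n) \to \infty$. For each candidate digit-special $n \in [3^m, 3^{m+1})$ one produces a lattice point on a line $l_m \subset \mathbb{R}^{k(n)}$ with direction determined by the fractional parts $\{m\log 3/\log p_i\}$, lying inside a product $\prod_{b \in B(n)} A_b(a_b)$ for some tuple of missing digits $(a_b)$. The relevant gap
\[
\Delta(n) := k(n) - 1 - \sum_{b \in B(n)} \frac{\log(b-1)}{\log b} > \sum_{b \in B(n)} \frac{1}{b \log b} - 1
\]
is positive once $t(n)$ is large enough (the first 26 primes plus $4$ already suffice, as in the proof of Theorem \ref{MAIN}), so Section \ref{UniSmall} supplies at most $3^{m\epsilon}$ candidates per digit tuple.

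The main obstacle, which explains why this approach alone cannot yield finiteness, is that the series $\sum_p 1/(p \log p)$ \emph{converges} to $\approx 1.095$, so the gap $\Delta(n)$ is bounded above by roughly $0.1$ no matter how large $B(n)$ becomes; the per-tuple bound never improves past $N^{\epsilon}$. Meanwhile, union-bounding over the $\prod_{b \in B(n)} b = e^{(1+o(1))t(n)}$ possible digit tuples introduces a multiplicative loss that becomes comparable to $n$ once $t(n)$ is of order $\log_2 n$, wiping out the gain. To close this gap, one would need to replace the union bound over digit tuples with a structural argument showing that for a \emph{typical} choice $(a_b)_{b \in B(n)}$ no integer in $[3^m, 3^{m+1})$ simultaneously avoids $a_b$ in base $b$ for all $b \in B(n)$. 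Equivalently, this demands a quantitative joint equidistribution statement for the digit patterns of $n$ across all bases $b \leq t(n)$, sharper than what follows from applying Section \ref{UniSmall} pointwise; such a statement would in turn rest on effective linear-independence measures for $\{1, \log p_1, \ldots, \log p_k\}$ that are uniform in $k$, which do not follow from Schanuel's conjecture. One might alternatively pursue a number-theoretic route via Kummer-type relations between digit patterns and $p$-adic valuations, but this is speculative. The gap between the qualitative algebraic independence supplied by Schanuel and the quantitative uniform Diophantine control needed to turn $N^{\epsilon}$ into $O(1)$ is, in our view, the essential difficulty in Conjecture \ref{WHY}.
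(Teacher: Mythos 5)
You have not proved the statement, and you should be aware that the paper does not prove it either: Conjecture \ref{WHY} is stated in Section \ref{discuss} as an \emph{open} strengthening of Theorem \ref{MAIN}, motivated by the numerical evidence in Figure \ref{fig:figure 4}, and no proof is offered anywhere in the paper. What you have written is an obstruction analysis rather than a proof, and as such it is essentially accurate and consistent with the paper's own position. The only tools available (Section \ref{UniSmall} applied to strongly multiplicatively independent bases, which under Schanuel's conjecture and Lemma \ref{Sch} covers any finite multiplicatively independent family) give covering-number bounds of the form $N^{\epsilon}$ once the dimension sum drops below $k-1$; they never give $O(1)$ counts, so no choice of a fixed tuple of bases can yield finiteness. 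Your quantitative remarks are also correct in substance: since $\log(b-1)/\log b = 1+\log(1-b^{-1})/\log b$, the deficit $k-1-\sum_{b}\log(b-1)/\log b$ converges (the paper's constant $C\approx 1.09561$ gives a limiting deficit of roughly $0.1$) as the base set grows, while union-bounding over all tuples of missing digits costs a factor $\prod_{b\in B(n)} b$, which is $n^{1+o(1)}$ once one uses all the bases that impose nontrivial conditions. A small correction: the bases one actually needs to consider for an integer $n$ are only those $g$ (prime or $4$) with roughly $g^{g}\leq n$, i.e.\ $g\lesssim \log n/\log\log n$, as noted in the paper's discussion of $\mathrm{Est}(n)$, since for larger $b$ the expansion of $n$ is too short to contain every digit; this slightly changes your estimate of the union-bound loss but not the conclusion that it is fatal.

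So the ``gap'' here is simply that the statement is a conjecture: neither you nor the authors have a route from the $N^{\epsilon}$ bounds of Theorem \ref{MAIN} to finiteness, and your identification of the missing ingredient (a quantitative, $k$-uniform joint equidistribution or effective independence statement going beyond what Schanuel's conjecture or Section \ref{UniSmall} provide, or alternatively a structural argument replacing the union bound over digit tuples, in the spirit of Conjecture \ref{STRONG}) is a fair description of why. Had you presented this as a proof it would be wrong; presented as an analysis of the difficulty, it matches the state of the art in the paper.
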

	
	\begin{conj}
		Digit-special numbers in $(0,1)$ are rational.
	\end{conj}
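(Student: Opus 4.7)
Since the final statement is an open conjecture---a strict strengthening of the zero-dimension statement in Theorem \ref{MAIN}---my plan is to sketch an attack and isolate where the main obstacle lies, rather than claim a full proof.

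I would proceed by contradiction: suppose $x \in (0,1)$ is digit-special and irrational. For each $b \geq 3$ pick a digit $a_b \in \{0,\dots,b-1\}$ missing from the base-$b$ expansion of $x$, so $x \in A_b(a_b)$ for all such $b$, in the notation of Lemma \ref{Thfraction}. Under Schanuel and Lemma \ref{Sch}, any enumeration $p_1 < p_2 < \dots$ of the primes $\geq 3$ together with $4$ is strongly multiplicatively independent, and the calculation in the proof of Theorem \ref{MAIN} already exhibits a finite $k$ (e.g.\ $k=26$) for which Lemma \ref{Thfraction} forces
\[
F_k := A_{p_1}(a_{p_1}) \cap \dots \cap A_{p_k}(a_{p_k})
\]
to have Hausdorff dimension zero. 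Thus $x$ lies in the nested zero-dimensional intersection $F_\infty := \bigcap_{k \geq 1} F_k$.

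To upgrade dimension zero to rationality, I would try to exploit the quantitative covering bound from Section \ref{UniSmall}: for every fixed $k$ large enough and $\epsilon>0$ there is $N_{\epsilon,k}$ with $N(F_k, 2^{-N}) \leq N^\epsilon$ for $N \geq N_{\epsilon,k}$. A diagonal argument would then couple the two parameters: given $N$, choose $k=k(N) \to \infty$ just fast enough that $x$ is confined to one of $\leq N^\epsilon$ dyadic intervals of length $2^{-N}$, and use that the \emph{choice} of which intervals survive is dictated by the digit constraints in bases $p_1,\dots,p_{k(N)}$ simultaneously. The arithmetic input one would then seek is a transcendence-theoretic statement---perhaps in the spirit of Ridout--Schmidt, or a Baker-type simultaneous linear-forms-in-logarithms bound---to the effect that the surviving base-$p_1$ truncations are all initial segments of rationals of bounded height, contradicting irrationality.

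The main obstacle, and the reason the statement remains a conjecture, is that for $i \geq 2$ the sets $A_{p_i}(a_{p_i})$ are not preserved by $\times p_1 \bmod 1$, so one cannot close up the $\times p_1$-orbit of $x$ inside any fixed $F_k$ and then invoke Furstenberg--Rudolph rigidity. The Shmerkin--Wu machinery of Section \ref{UniSmall} controls the \emph{size} of $F_k$ but is silent on the arithmetic nature of its individual points; passing from a small set invariant under no single dynamics to the conclusion that its elements are rational seems to require a genuinely new idea. A natural and already substantial intermediate target is to show that $F_\infty$ is at most \emph{countable}; only after this does the identification $F_\infty \subseteq \mathbb{Q} \cap (0,1)$ seem within reach of a separate arithmetic argument.
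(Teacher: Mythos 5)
This statement is not a theorem of the paper: it is stated (together with Conjecture \ref{WHY}) as an open conjecture in the discussion section, and the paper offers no proof of it whatsoever. So there is no argument of the authors to compare yours against, and your decision to present an attack sketch rather than claim a proof is the correct reading of the situation. Your diagnosis of the central obstruction also matches what the paper's own machinery can and cannot do: the results quoted in Section \ref{UniSmall} (and their use in Lemma \ref{Thfraction} and Theorem \ref{MAIN}) control covering numbers, hence dimension, of the intersections, but they carry no information about the arithmetic nature of the surviving points. Indeed, no bound of the form ``$\Haus = 0$'' or even ``countable'' can by itself force rationality, since dimension-zero sets (e.g.\ the Liouville numbers) contain transcendental elements in abundance; some genuinely arithmetic input, absent from the paper, is needed.

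Two smaller caveats about your sketch, for what it is worth as a research plan. First, the diagonalisation $k = k(N) \to \infty$ runs into a uniformity problem: the threshold $N_\epsilon$ in Section \ref{UniSmall} depends on the bases $p_1,\dots,p_k$ (and on the dimension deficit $k-1-\sum_i \Haus A_{p_i}(a_{p_i})$), and the cited results give no control on how it grows with $k$, so one cannot let $k$ grow with $N$ without new quantitative estimates. Second, since each $F_k$ already has dimension zero for $k = 26$, passing to $F_\infty$ gains nothing at the level of dimension; the hoped-for gain would have to come from the interaction of infinitely many digit constraints, which is exactly where a Ridout--Schmidt or linear-forms-in-logarithms input would have to enter, and no such statement currently applies. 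In short: there is no gap to report relative to the paper, because neither you nor the paper proves the statement; your assessment that it remains open, and why, is accurate.
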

	
	\subsection{Binary digit expansions in base $3$ and $4$.}
	Next, we will discuss some further conjectures and questions relating to the sequence \cite{OEIS2} on numbers with only binary digits in their base $3$ and $4$ expansions. For \cite{OEIS2}, it would be interesting to compute the exact density of the appearance of $0$. Recall that in Theorem \ref{A230360} we showed that $0$ must appear at a lower density of at least $0.36907$. In addition, Figure \ref{fig:figure 1} suggests that for the non-zero terms it seems likely the exponent $\log 2/\log 3-0.5$ is essentially sharp. The following questions makes this precise.
	\begin{ques}
		For each $\epsilon>0,$ are there infinitely many integers $n\in\mathbb{N}$ such that $
		\mathcal{S}(n) \geq  4^{n(\log2/\log3-0.5-\epsilon)}$?
	\end{ques}
	It is already interesting to see whether $\mathcal{S}(n)>0$ for infinitely many $n.$ Unfortunately, Theorem \ref{EGRS} cannot help us to find an answer, since in the statement of the theorem we must have $A=B=1$, but then 
	\[
	\frac{1}{2}+\frac{1}{3}<1.
	\] 
	On the other hand, if we were to consider numbers containing $\{0,1,2\}$ in their base $4$ expansion, then we would find infinitely many, since
	\[
	\frac{1}{2}+\frac{2}{3}>1.
	\]
	
	\begin{ques}\label{zeroden}
		What is the lower density of $\{n\in\mathbb{N}: \mathcal{S}(n)=0 \}$?
	\end{ques}
	In relation to Question \ref{zeroden}, note that for the last part of Theorem \ref{A230360}, we were required to identify the proportion of slopes in Figure \ref{fig:figure 2} avoiding a cantor-like set. The estimate given is based on just the largest interval of such slopes. In fact, there are smaller gaps that we did not point out, as illustrated in Figure \ref{fig:figure 3}, which is a zoomed-in picture of Figure \ref{fig:figure 2}. Including these further regions in the calculation yields a small improvement of approximately $0.0115$ to the lower density estimate. Thus, the heart of Question \ref{zeroden} is to compute the sum of the lengths of all such gaps.\\
	
	\begin{figure}[h]
		\includegraphics[width=0.75\linewidth, height=8cm]{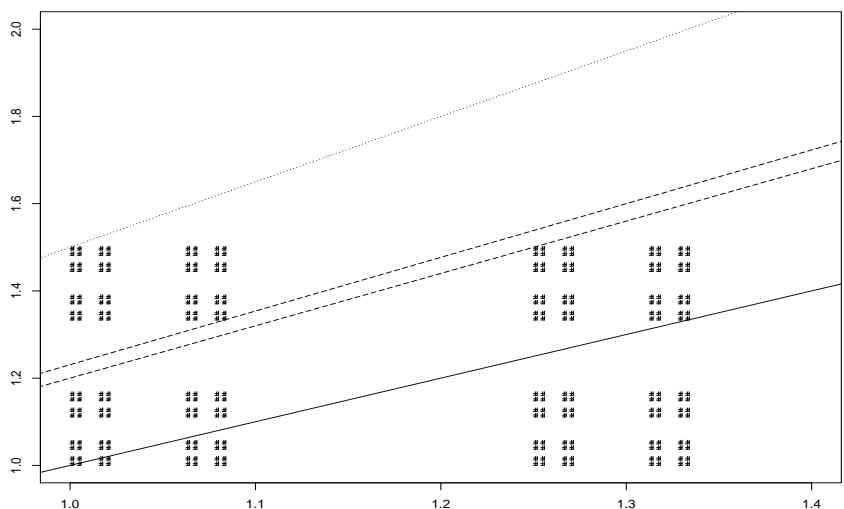} 
		\caption{Zoomed-in version of Figure \ref{fig:figure 2}, the lines have slops $1,1.2,16/13,1.5.$}
		\label{fig:figure 3}
	\end{figure}
	
	As a final question, note that thus far we have separately discussed integers and numbers in $(0,1).$ However, the similarity in the methods used suggests a potential connection, which we describe in the following conjecture.
	
	\begin{conj}\label{STRONG}
		Let $p_1,\dots,p_k$ be strongly multiplicatively independent integers. For each $i\in\{1,\dots,k\}$ and $D_i\subset \{0,\dots,p_i-1\}$, define
		\[
		A_{p_i}(D_i)=\{x\in (0,1): \text{the $p$-ary expansion of $x$ does not have a digit in $D_i$} \}
		\]
		and
		\[
		\tilde{A}_{p_i}(D_i)=\{x\in\mathbb{N}: \text{the $p$-ary expansion of $x$ does not have a digit in $D_i$} \}.
		\]
		
		Furthermore, let $A=\cap_{i=1}^k A_{p_i}(D_i).$ If $$\sum_{i\in\{1,\dots,k\}}\Haus A_{p_i}=s\in (k-1,k),$$ then there exist constants $c$ and $C$ such that
		\[
		c N^{s-(k-1)}\leq \# A\cap [1,N]\leq C N^{s-(k-1)}
		\]
		for all integers $N.$ If $s<k-1$, then $A$ is finite.
	\end{conj}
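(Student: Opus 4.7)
The plan is to adapt the sumset--decomposition method from the proof of Theorem \ref{A230360} to arbitrary $k$. I first interpret the conjecture's $A$ as $\bigcap_{i=1}^k \tilde{A}_{p_i}(D_i)$ (the integer version), so that $\#A\cap[1,N]$ is meaningful, and read $\Haus A_{p_i}$ as $\Haus A_{p_i}(D_i)=\log(p_i-|D_i|)/\log p_i$, which is a standard box-counting computation.

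\textbf{Upper bound when $s\in(k-1,k)$.} Mirroring Lemma \ref{Thinteger}, I would encode integers $n\in A\cap[p_1^N,p_1^{N+1})$ as intersection points of a line $l_N$, whose slope vector is controlled by $\{N\log p_1/\log p_i\}$, with a rescaled product set $P=\prod_i c_i A_{p_i}(D_i)$ sitting in a bounded cube. Since $s>k-1$, the uniform small dimension result of Section \ref{UniSmall} does not apply directly to $P$. To bypass this, I would invoke \cite[Lemma 11.1]{Y19} to obtain sumset decompositions $A_{p_i}(D_i)\subseteq E_i+F_i$, where each $E_i$ lies in a closed $\times p_i^{m_i}\bmod 1$ invariant set of prescribed dimension just below $\Haus A_{p_i}(D_i)$, and where $\Haus E_i+\Haus F_i$ can be made arbitrarily close to $\Haus A_{p_i}(D_i)$. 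Fixing the $\Haus E_i$'s so that $\sum_i \Haus E_i=k-1-\delta$ for small $\delta>0$, the uniform small dimension theorem applies to each translate $\prod_i(E_i+t_i)$ and yields at most $p_1^{N\epsilon}$ integer points per translate, while the number of relevant discretised translates $(t_1,\dots,t_k)$ is bounded by $\prod_i p_i^{O(N)\Haus F_i}$. Multiplying gives a bound of order $N^{s-(k-1)+\epsilon+\delta}$, the $\epsilon$-loss version of the conjectured upper bound.

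\textbf{Lower bound.} For this I would exploit the fact that, when $s>k-1$, Marstrand-type slicing principles for products of $\times p_i\bmod 1$ invariant sets (combined with the $\mathbb{Q}$-linear independence hypothesis, which suppresses exceptional direction sets in the spirit of Section \ref{UniSmall}) suggest $\Haus(l_N\cap P)\geq s-(k-1)$ for a density-one sequence of $N$, giving $\gtrsim p_1^{N(s-(k-1))}$ intersection points at scale $p_1^{-N}$. The delicate point is to promote these fractal intersection points into \emph{genuine} integer solutions, which I would handle by adapting the explicit digit-concatenation construction behind the Erd\H{o}s--Graham--Ruzsa--Straus theorem (Theorem \ref{EGRS}), producing bona-fide integers after a rounding step that respects all $k$ digit constraints simultaneously.

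\textbf{The main obstacles} are twofold. First, removing the $\epsilon$ in the upper bound to attain the sharp $CN^{s-(k-1)}$ demands sharpening the uniform bound of Section \ref{UniSmall} from $N^\epsilon$ to a constant, which is only known in very structured settings. Second, and most substantially, the finiteness claim in the regime $s<k-1$ is a strict strengthening of Lemma \ref{Thinteger}: to upgrade the $N^\epsilon$ bound to outright finiteness, one would need to combine the uniform dimension result with quantitative Diophantine input, such as Baker-type bounds on linear forms in $\log p_i$, to force the simultaneous approximation conditions satisfied by integer solutions on $l_N$ to become infeasible for all $N$ beyond some $N_0$. This Diophantine refinement of the uniform dimension theorem appears to be the principal barrier to the full conjecture.
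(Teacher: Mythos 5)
You have not proved this statement, and neither does the paper: \ref{STRONG} is stated as an open conjecture (the paper only observes that, together with Schanuel's conjecture, it would imply Conjecture \ref{P} and hence settle Graham's problem). Your write-up is a research plan rather than a proof, and the decisive steps are exactly the ones you yourself flag as obstacles. For the upper bound, the decomposition scheme borrowed from Theorem \ref{A230360} and Lemma \ref{Thinteger} structurally yields $O\bigl(N^{s-(k-1)+\epsilon}\bigr)$, because the input from Section \ref{UniSmall} is a covering bound of the form $N^{\epsilon}$ per slice, not a constant; removing the $\epsilon$ to reach $CN^{s-(k-1)}$ would require a strengthening of the uniform intersection theorem of \cite{Y19} that is not available, so the sharp upper bound is not obtained. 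For the lower bound the gap is worse: the results of \cite{Sh} and \cite{Wu} invoked in Section \ref{UniSmall} are \emph{upper} bounds on slices, and no Marstrand-type \emph{lower} bound $\Haus(l_N\cap P)\geq s-(k-1)$ is known for the specific lines $l_N$ arising here (a typical-direction statement would not suffice, since the slopes $p_1^{\{N\log p_1/\log p_i\}}$ form a measure-zero family). Even granting such a bound, a positive-dimensional slice does not produce integer points; your proposed "rounding step" via the Erd\H{o}s--Graham--Ruzsa--Straus construction does not apply, since Theorem \ref{EGRS} needs the stronger hypothesis $\sum A_i/(p_i-1)\geq 1$ rather than the logarithmic condition $s>k-1$ — indeed the paper explicitly notes (Section \ref{discuss}) that it cannot even show $\mathcal{S}(n)>0$ infinitely often in the base $3,4$ case, which is the simplest instance of your claimed lower bound.

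The finiteness assertion for $s<k-1$ is likewise untouched: Lemma \ref{Thinteger} gives only $O(N^{\epsilon})$, and upgrading that to finiteness would need quantitative Diophantine control (e.g.\ effective bounds on how fast $\{N\log p_1/\log p_i\}$ can approach the exceptional slope set) that neither the paper nor your sketch supplies; note that even the special case $k=3$, $(p_1,p_2,p_3)=(3,4,5)$ with binary digit sets is precisely Conjecture \ref{IMPOSSIBLE}, for which the paper's best result is Theorem \ref{A146025}, an $O(N^{\epsilon})$ count. So your proposal correctly identifies the landscape and reuses the paper's machinery sensibly, but each of the three claims in the conjecture (sharp upper bound, matching lower bound, finiteness below the critical sum) remains unproved, and the statement should be treated as open.
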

	We will see shortly that under Schanuel's conjecture, the above conjecture may resolve Graham's problem. 
	\subsection{A return to binomial coefficients}
	Unless otherwise mentioned, Schanuel's conjecture is assumed for the discussions in this subsection. A number of problems related to the prime factors of binomial coefficients $\binom{2n}{n}$ have been discussed, but the alert reader may notice that we actually have not explicitly proved Theorem \ref{PP}. However, one can easily modify  the proof of Theorem \ref{A230360} to show Theorem \ref{PP} with just a few key observations. First, notice that $3,5,7$ satisfy the condition on $p_1,p_2,p_3$ in the result of Section \ref{UniSmall} and that
\begin{equation}\label{calc}
	\frac{\log 2}{\log 3}+\frac{\log 3}{\log 5}+\frac{\log 4}{\log 7}-2\approx 0.0259<0.026.
	\end{equation}
	Without using Schanuel's conjecture, we can use Theorem \ref{three choices} to find an integer $n\in\{7,11,13\}$ such that $1,\log3/\log 5,\log 3/\log n$ are $\mathbb{Q}$-linearly independent. The worst upper bound occurs when $n=13$,
	\[
	\frac{\log 2}{\log 3}+\frac{\log 3}{\log 5}+\frac{\log 7}{\log 13}-2\approx 0.0722<0.073.
	\]
Under Schanuel's conjecture, however, we may set $n=7$ and perform a decomposition as discussed in the proof of Theorem \ref{A230360} together with (\ref{calc}) to deduce Theorem \ref{PP}.\\

Now let us put Conjecture \ref{STRONG} into play. Together with the computations above, we see that Conjecture \ref{STRONG} would imply Conjecture \ref{P} and thus answer the Graham's $\$1000$ problem.\\
	
	Finally, we note how our methods also provide information on natural generalisations of Graham's problem. For example, one may consider $1155=3\times 5\times 7\times 11$ in place of $105,$ see \cite{OEIS8} for some numerical computations. It is conjectured that the only integers $n$ such that $\binom{2n}{n}$ is coprime with $1155$ are $0,1,3160$, motivated by the fact there are no other examples smaller than $10^{10000}.$ Observing that
	\[
	\frac{\log 2}{\log 3}+\frac{\log 3}{\log 5}+\frac{\log 4}{\log 7}+\frac{\log 6}{\log 11}<3,
	\]
	the proof of Theorem \ref{A146025} may be generalised to yield the following.
	\begin{thm}
		Assume Schanuel's conjecture. Let $N$ be an integer. Denote $G(N)$ to be the number of positive integers $n\leq N$ such that $\binom{2n}{n}$ is coprime with $1155$. Then for all $\epsilon>0,$ we have
		\[
		G(N)\leq N^{\epsilon}
		\]
		for all large enough $N.$
	\end{thm}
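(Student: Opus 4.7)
The plan is to generalise the proof of Theorem \ref{A146025} from two bases to four. By Kummer's theorem, $\binom{2n}{n}$ is coprime with $1155 = 3 \cdot 5 \cdot 7 \cdot 11$ precisely when the base-$p$ expansion of $n$ uses only digits in $\{0,1,\dots,(p-1)/2\}$ for each $p \in \{3,5,7,11\}$. Hence $G(N)$ counts integers $n \leq N$ whose base $3, 5, 7, 11$ expansions lie in digit sets of cardinalities $2, 3, 4, 6$ respectively. The corresponding restricted-digit subsets of $[0,1]$ are closed $\times p_i \bmod 1$ invariant and have Hausdorff dimensions $\log 2/\log 3$, $\log 3/\log 5$, $\log 4/\log 7$, $\log 6/\log 11$.

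Next I would verify the two hypotheses of the uniform small-dimension intersection result of Section \ref{UniSmall} with $k=4$. Since $3,5,7,11$ are distinct primes they are multiplicatively independent integers, so Schanuel's conjecture combined with Lemma \ref{Sch} yields that $1, \log 3/\log 5, \log 3/\log 7, \log 3/\log 11$ are $\mathbb{Q}$-linearly independent, i.e.\ these four primes are strongly multiplicatively independent. A direct numerical check gives
\[
\frac{\log 2}{\log 3}+\frac{\log 3}{\log 5}+\frac{\log 4}{\log 7}+\frac{\log 6}{\log 11} \approx 2.773 < 3 = k-1,
\]
which is precisely the condition required to invoke the uniform covering estimate.

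With these ingredients in place, I would copy the argument of Lemma \ref{Thinteger} essentially verbatim with $k=4$, modified to restrict to allowed-digit sets rather than miss a single digit (the same modification already used in the proof of Theorem \ref{A146025}). For each $m$ and each $n \in [p_1^m, p_1^{m+1})$ satisfying the digit conditions, the renormalisation $\mathbf{a}_n$ from the proof of Lemma \ref{Thinteger} lies on a line $l_m$ through the origin (whose direction is controlled by the fractional parts $\{m \log p_1/\log p_i\}$) and in a bounded product of scaled invariant sets. The uniform bound of Section \ref{UniSmall} delivers, for any $\epsilon > 0$ and all sufficiently large $m$, at most $p_1^{\epsilon m}$ admissible points at scale $m$. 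Summing over the $O(\log N)$ relevant scales yields $G(N) \leq C_\epsilon N^{\epsilon}$ for all large $N$.

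The only non-automatic point I anticipate is checking that the allowed-digits variant still fits the hypotheses of Section \ref{UniSmall}: that the allowed-digit set in base $p$ is closed, $\times p \bmod 1$ invariant, of the stated Hausdorff dimension, and that the scalings and translations appearing after renormalisation remain uniformly bounded in the sense required for the \emph{uniform} (rather than merely dimensional) intersection estimate. This is routine: the allowed-digit set is the attractor of the IFS $\{x \mapsto (x+d)/p : d \in D\}$, hence self-similar, strictly invariant, and of dimension $\log|D|/\log p$; and the uniform boundedness of $u_i, v_i$ follows exactly as in the proof of Lemma \ref{Thinteger}. No further obstruction is expected.
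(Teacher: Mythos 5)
Your proposal is correct and follows essentially the same route as the paper: Kummer's theorem translates the coprimality condition into restricted-digit conditions in bases $3,5,7,11$, Schanuel's conjecture via Lemma \ref{Sch} gives strong multiplicative independence, and since $\frac{\log 2}{\log 3}+\frac{\log 3}{\log 5}+\frac{\log 4}{\log 7}+\frac{\log 6}{\log 11}<3$ the $k=4$ case of the uniform intersection result of Section \ref{UniSmall} applies, so the argument of Lemma \ref{Thinteger} (as adapted for Theorem \ref{A146025}) goes through verbatim. This is exactly the generalisation the paper indicates, with the details filled in consistently.
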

	\subsection{Strongly multiplicative independence}
	If $k\geq 3$, we have seen that Schanuel's conjecture implies some integers $p_1,p_2,\dots,p_k$  are strongly multiplicatively independent if they are multiplicatively independent. In other words, if
	\[
	1,\log p_1,\dots, \log p_k
	\]
	are $\mathbb{Q}$-linearly independent. Without assuming Schanuel's conjecture, we saw that at least one of the triples $(3,5,7), (3,5,11), (3,5,13)$ are strongly multiplicatively independent. Thus there exists at least one strongly multiplicatively independent integer triple.\\
	
If one wishes to consider strong multiplicative independence of quadruples, the situation is far more complex and the proof of Theorem \ref{three choices} cannot be directly generalized. It is unknown whether such quadruples exist, although, under Schanuel's conjecture, one would expect there to be multitudes. 

	\section{Acknowledgement}
	The authors would like to thank Douglas Howroyd for many useful discussions and help with the numerical computations used to create Figure \ref{fig:figure 4}. The authors also want to thank Carlo Sanna for bringing us the connection between Graham's problem and results in an early version of this manuscript. SAB was supported by a \emph{Carnegie Trust PhD Scholarship} (PHD060287) and HY was financially supported by the University of St Andrews,  the University of Cambridge and the Corpus Christi College, Cambridge. HY has received funding from the European Research Council (ERC) under the European Union’s Horizon 2020 research and innovation programme (grant agreement No. 803711).
	
	\bibliographystyle{amsplain}
	
\end{document}